\documentclass[11pt]{article}

\usepackage{amsmath,amssymb,amsthm,mathtools}
\usepackage[authoryear]{natbib}
\usepackage[colorlinks,citecolor=blue,urlcolor=blue]{hyperref}
\usepackage{enumitem}
\usepackage{bm}
\usepackage{tikz}
\usetikzlibrary{decorations.pathreplacing}
\usepackage{mathrsfs}
\usepackage{tcolorbox}
\usepackage[margin=2.5cm]{geometry}

\theoremstyle{plain}
\newtheorem{theorem}{Theorem}[section]
\newtheorem{proposition}[theorem]{Proposition}

\theoremstyle{definition}

\newtheorem{example}[theorem]{Example}

\newtheorem{procedure}[theorem]{Procedure}

\newtheorem{openproblem}[theorem]{Open problem}

\newcommand{\R}{\mathbb{R}}

\newcommand{\E}{\mathbb{E}}
\newcommand{\Var}{\mathrm{Var}}

\newcommand{\KL}{\mathrm{KL}}
\newcommand{\one}{\mathbf{1}}
\newcommand{\cM}{\mathcal{M}}
\newcommand{\cF}{\mathcal{F}}
\newcommand{\cD}{\mathcal{D}}
\newcommand{\cS}{\mathcal{S}}

\newcommand{\cE}{\mathcal{E}}

\newcommand{\Hd}{\{0,1\}^d}

\begin{document}

\title{Influence as soft sparsity: Estimation of monotone
  functions on $\{0,1\}^d$}
\author{G\'erard Biau\\
  \small Sorbonne Universit\'e and Institut Universitaire de France\\
  \small\texttt{gerard.biau@sorbonne-universite.fr}}
\date{}
\maketitle

\begin{abstract}
We study the problem of estimating a monotone function
  $f:\{0,1\}^d\to[0,1]$ from noisy observations at uniformly
  random vertices of the Boolean hypercube.  As a measure of
  complexity for the target~$f$, we use the total
  $L^1$-influence
  $I(f)=\sum_{i=1}^d(\E[f(X)\mid X_i=1]-\E[f(X)\mid X_i=0])$,
  a classical quantity in Boolean analysis that is nonnegative
  for monotone functions and controls the effective
  dimensionality of the estimation problem: through a spectral
  concentration result in the spirit of Friedgut's junta
  theorem, the Fourier spectrum of any $f$ with $I(f)\leqslant K$
  concentrates on low-degree subsets of the influential
  coordinates.  We establish minimax bounds over the class
  $\cF_K=\{f:\{0,1\}^d\to[0,1],\;f\text{ monotone},\;
  I(f)\leqslant K\}$:
  \[
    c\,\frac{K^2}{(\log n)^{3/2}}
    \;\leqslant\;
    \inf_{\hat f}\;\sup_{f\in\cF_K}\;
    \E\bigl[\|\hat f - f\|_2^2\bigr]
    \;\leqslant\; C\,\frac{K}{\sqrt{\log n}},
  \]
  where $n$ is the sample size.  The upper bound holds for all
  $K\geqslant 1$ and is uniform in the ambient dimension~$d$
  (under the mild condition $\log d\leqslant n^{1-\varepsilon}$).
  It is achieved by a Fourier thresholding estimator that adapts
  to the unknown~$K$.  The lower bound relies on a
  Varshamov--Gilbert packing on the middle layer of the
  hypercube combined with Fano's inequality.
\end{abstract}

\section{Introduction}\label{sec:intro}

\subsection{Context and motivation}\label{ssec:context}

Binary features arise naturally in many applied settings.  In clinical
medicine, patient profiles are described by the presence or absence of
risk factors---smoking, hypertension, diabetes, family history of
disease---and the probability of an adverse outcome is naturally
modelled as a \emph{monotone} function of these binary indicators,
encoding the prior that each additional risk factor can only increase
the risk \citep{feelders2010}.  In credit scoring, a borrower is
characterized by binary flags (employed or not, homeowner or not,
previous default or not) and the default probability is monotone in
these covariates \citep{ben-david1995}.  In pharmacogenomics, the
response to a drug may depend monotonically on the presence or absence
of deleterious genetic variants \citep{weinreich2013}.

In all these examples the number~$d$ of binary features can be large
(tens to thousands), while the sample size~$n$ may be comparatively
modest.  The monotonicity constraint alone does not overcome the
curse of dimensionality: even in the simplest case of Boolean-valued
functions, the number of monotone $f:\{0,1\}^d\to\{0,1\}$ grows
faster than any exponential in~$d$ (it is the Dedekind
number~$D(d)$, satisfying
$\log_2 D(d)\sim\binom{d}{\lfloor d/2\rfloor}\approx
2^d/\sqrt{d}$), and estimation over the full class~$\cM_d$ of
monotone functions $\{0,1\}^d\to[0,1]$ is at least as hard.
In practice, however, domain experts often believe that only a
moderate number of features meaningfully affect the outcome---but
\emph{which} features is unknown.

This motivates the search for a complexity measure that captures
the effective number of relevant features, is compatible with the
monotonicity constraint, and leads to a tractable estimation problem
when it is small.  A natural candidate is the \emph{total $L^1$-influence} of~$f$,
a quantity studied in the analysis of Boolean functions
\citep{kahn1988,odonnell2014,kelman2021} that, as we show, is
particularly well suited to the statistical setting.

\subsection{The model}\label{ssec:model}
Let $d\geqslant 1$.  We work on the Boolean hypercube
$\Hd$, equipped with the coordinatewise partial order
($x\leqslant y$ iff $x_i\leqslant y_i$ for all~$i$) and the uniform measure
$\mu=\mathrm{Unif}(\Hd)$.  We observe $n$ independent pairs
\begin{equation*}
  Y_j = f(X_j) + \varepsilon_j, \qquad j=1,\ldots,n,
\end{equation*}
where $X_1,\ldots,X_n\stackrel{\mathrm{iid}}{\sim}\mu$,
$\varepsilon_1,\ldots,\varepsilon_n$ are independent centered
random variables satisfying the sub-Gaussian tail condition
$\E[e^{t\varepsilon_j}]\leqslant e^{\sigma^2 t^2/2}$ for all
$t\in\R$, and the $\varepsilon_j$ are independent of the $X_j$. 

The function $f:\Hd\to[0,1]$ is assumed to be
\emph{monotone} (coordinatewise nondecreasing):
\begin{equation*}
  x\leqslant y \;\Longrightarrow\; f(x)\leqslant f(y).
\end{equation*}
We denote by $\cM_d$ the class of all monotone functions
$\Hd\to[0,1]$, and by
\[
  \cD_n=\{(X_1,Y_1),\ldots,(X_n,Y_n)\}
\]
the observed sample.
The performance of an estimator
$\hat f=\hat f(\cD_n)$ is measured by the integrated squared risk
\begin{equation*}
  R(\hat f,f)
  := \E_{\cD_n}\bigl[\|\hat f - f\|_2^2\bigr],
  \qquad
  \|g\|_2^2 := \E_\mu[g(X)^2]
  = \frac{1}{2^d}\sum_{x\in\Hd}g(x)^2,
\end{equation*}
where $\E_{\cD_n}$ denotes expectation over the sample
and $\E_\mu$ denotes expectation over $X\sim\mu$.
When there is no ambiguity, we simply write~$\E$.

For $f:\Hd\to\R$, $i\in[d]$, and $x\in\Hd$, write
$x^{i\to b}$ for~$x$ with the $i$-th coordinate replaced by~$b$,
and $\Delta_i f(x):=f(x^{i\to 1})-f(x^{i\to 0})$ for the discrete
derivative.  The \emph{$L^2$-influence} of coordinate~$i$ on~$f$ is
$I_i^{(2)}(f) := \E[(\Delta_i f(X))^2]$, and the \emph{total
$L^2$-influence} is $I^{(2)}(f):=\sum_{i=1}^d I_i^{(2)}(f)$.  This
is the classical notion of influence introduced by \citet{kahn1988};
see \citet[Chapter~2]{odonnell2014} for a thorough treatment.

When $f:\Hd\to[0,1]$ is monotone, the discrete derivative
$\Delta_i f$ takes values in~$[0,1]$, and a second notion becomes
natural: the \emph{$L^1$-influence}
$I_i(f) := \E[|\Delta_i f(X)|] = \E[\Delta_i f(X)]$, with total
$I(f):=\sum_{i=1}^d I_i(f)$. The pointwise bound
$(\Delta_i f)^2\leqslant\Delta_i f$ (valid since
$\Delta_i f\in[0,1]$) gives
\begin{equation}\label{eq:L1-L2}
  I_i(f)^2 \;\leqslant\; I_i^{(2)}(f) \;\leqslant\; I_i(f)
  \qquad\text{for all }i\in[d],
\end{equation}
where the left inequality is Jensen's.  Summing, we see that
$I^{(2)}(f)\leqslant I(f)$.

The reason we work with the $L^1$-influence rather than the
$L^2$-influence is the identity
\begin{equation}\label{eq:inf-means}
  I_i(f) = \E[f(X)\mid X_i=1] - \E[f(X)\mid X_i=0],
\end{equation}
which holds because $X^{i\to b}\stackrel{d}{=}(X\mid X_i=b)$
under the uniform measure.  Each $I_i(f)$ is thus a simple difference of conditional means,
directly estimable from data at parametric rate.  At the same time,
the bound $I^{(2)}(f)\leqslant I(f)$ ensures that controlling the
$L^1$-influence is enough to obtain Fourier-analytic approximation
guarantees.  It is this combination---statistical tractability and
analytical power---that makes the $L^1$-influence the right
complexity measure for our problem.

With this in mind, we study the class
\begin{equation*}
  \cF_K := \bigl\{f:\Hd\to[0,1] : f\in\cM_d,\; I(f)\leqslant K\bigr\}
\end{equation*}
for a parameter $K>0$.  Since $I_i(f)\leqslant 1$ for each~$i$,
we always have $I(f)\leqslant d$, so $\cF_d=\cM_d$ and the constraint
$I(f)\leqslant K$ is informative when $K<d$.

The following examples, borrowed from the analysis of Boolean
functions (see \citealp{odonnell2014}), illustrate the
range of behaviors captured by the total influence.

\begin{example}\label{ex:examples}
  \emph{(a)~Dictator.}\; The output copies a single coordinate:
  $f(x)=x_1$.  Only one variable matters, and $I(f)=1$.

  \emph{(b)~Additive junta.}\; The output averages $s$ coordinates:
  $f(x)=(1/s)\sum_{i=1}^s x_i$ for $s\leqslant d$.  The
  function depends on~$s$ coordinates, yet its total influence
  is~$1$, independent of~$s$.

 \emph{(c)~Tribes.}\; The coordinates are divided into $\ell$ blocks
of size~$w\approx\log_2 d$, and the output is~$1$ if and only if
at least one block is entirely composed of~$1$'s:
$f(x)=\bigvee_{j=1}^\ell\bigwedge_{i\in T_j}x_i$.  This function
is monotone, depends on all~$d$ coordinates, and satisfies
$I(f)=\Theta(\log d)$.

  \emph{(d)~Majority.}\; The output is~$1$ if more than half the
  coordinates are~$1$: $f(x)=\one\{\sum_i x_i > d/2\}$.  Every coordinate contributes equally, and
  $I(f)=\Theta(\sqrt{d})$.
\end{example}

These examples cover the full range $K\in\{1,\Theta(\log d),
\Theta(\sqrt{d})\}$, showing that the parameter~$K$ need not be a
universal constant: it may depend on~$d$, and our results are
stated for all $K\leqslant d$.

\subsection{Main contributions}\label{ssec:contributions}

We establish minimax bounds on the estimation risk over~$\cF_K$
that reveal a sharp dependence on the influence budget~$K$ and
the sample size~$n$, uniform in the ambient dimension~$d$.  The
upper bound $O(K/\sqrt{\log n})$ is achieved by a Fourier
thresholding estimator that adapts to the unknown~$K$ and remains
valid for~$d$ growing nearly exponentially in~$n$.  The lower
bound $\Omega(K^2/(\log n)^{3/2})$ is proved by a
Varshamov--Gilbert packing on the middle layer of the Boolean
hypercube, combined with Fano's inequality.  A finer analysis,
stratifying~$\cF_K$ by the $L^2$-influence $I^{(2)}(f)$, reveals
that the apparent $\log n$ gap between the two bounds is an
aggregation artifact: on each sub-class
$\cF_{K,B}:=\{f\in\cF_K: I^{(2)}(f)\leqslant B\}$, the gap
reduces to~$\sqrt{\log n}$.  Precise statements are given in
Section~\ref{sec:main}.

\section{Main results}\label{sec:main}

\subsection{Minimax bounds}\label{ssec:minimax}
Our goal is to determine how the worst-case estimation risk over
$\cF_K$ depends on the sample size~$n$ and the complexity
parameter~$K$.  The following theorem provides matching upper and
lower bounds, up to a gap discussed below.

\begin{theorem}[Minimax bounds]\label{thm:main}
  For every $\varepsilon\in(0,1)$, there exist constants $c,C>0$
  (depending only on $\sigma$ and~$\varepsilon$) such that the
  following holds for all integers $n\geqslant 2$ and
  $d\geqslant 1$.
  \begin{enumerate}[label=(\roman*)]
  \item \textbf{Upper bound.}\; For every
    $1\leqslant K\leqslant d$, provided
    $\log d\leqslant n^{1-\varepsilon}$, there exists an
    estimator $\hat f_n$ (depending on~$\cD_n$ but not
    on~$K$) such that
    \begin{equation}\label{eq:upper}
      \sup_{f\in\cF_K}R(\hat f_n,f)
      \;\leqslant\; C\,\frac{K}{\sqrt{\log n}}.
    \end{equation}
  \item \textbf{Lower bound.}\; For every
    $K\leqslant c\sqrt{\log n}$ and $d\geqslant(1/c)\log n$,
    \begin{equation*}
      \inf_{\tilde f}\;\sup_{f\in\cF_K}R(\tilde f,f)
      \;\geqslant\; c\,\frac{K^2}{(\log n)^{3/2}},
    \end{equation*}
    where the infimum is over all estimators
    $\tilde f=\tilde f(\cD_n)$.
  \end{enumerate}
\end{theorem}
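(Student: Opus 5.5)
The proof splits into the upper bound (i) and the lower bound (ii), which use different tools.

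\textbf{Upper bound.} The plan is to build a low-degree Fourier estimator restricted to coordinates whose estimated influence is large, and then balance three errors: truncation, variance and selection. Write $f=\sum_S\hat f(S)\chi_S$ in the $\pm1$ Fourier basis.

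The approximation step is a Friedgut-type spectral concentration bound. Since $\sum_S|S|\hat f(S)^2=I^{(2)}(f)/4\leqslant K/4$, we get
\[
\sum_{|S|>k}\hat f(S)^2\leqslant K/(4k).
\]
For coordinates $i$ with $I_i^{(2)}(f)\leqslant I_i(f)\leqslant\tau$, I would bound the weight on sets meeting these coordinates. This step uses monotonicity: $\hat f(\{i\})=I_i(f)/2$, together with a hypercontractivity argument in the style of Friedgut's junta theorem. The aim is a bound of the form $\sum_{S\cap J\neq\emptyset,|S|\leqslant k}\hat f(S)^2\lesssim K\,\tau^{c}\,3^{k}$, where $J=\{i: I_i(f)\leqslant\tau\}$.

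The estimator then proceeds in three steps:
\begin{enumerate}
\item Estimate each $I_i(f)$ by the difference of empirical conditional means (eq.~\eqref{eq:inf-means}). Each estimate has error $O(\sqrt{\log d/n})$ uniformly in $i$ by a union bound, using sub-Gaussian noise and $f\in[0,1]$.
\item Keep the set $\hat T=\{i:\hat I_i>\tau\}$, which has size $\lesssim K/\tau$.
\item Estimate the coefficients $\hat f(S)$ for $S\subseteq\hat T$ with $|S|\leqslant k$ by empirical averages of $Y\chi_S(X)$, and threshold them.
\end{enumerate}

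The number of such $S$ is at most $(eK/(\tau k))^k$, so the variance term is about $(K/\tau)^k/n$. Choosing $k\asymp\sqrt{\log n}$ and $\tau=n^{-\delta}$ makes this variance $n^{-\Omega(1)}$. The truncation error is then $K/k\asymp K/\sqrt{\log n}$, and the junta error $K\tau^{c}3^k$ is negligible. Adaptivity to $K$ follows because $\tau$ and $k$ depend only on $n$. The condition $\log d\leqslant n^{1-\varepsilon}$ ensures that the influence estimates are accurate to order $\tau$. The hard step is the junta-type concentration with an explicit polynomial dependence on $\tau$ and only exponential dependence on $k$. I would first try to derive it via $\|\Delta_i f^{\le k}\|_{4/3}$ hypercontractivity combined with $\|\Delta_i f\|_1=I_i$.

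\textbf{Lower bound.} Set $m\asymp\log n$ and take the middle layer $L$ of $\{0,1\}^m$, which has size $\binom{m}{m/2}\asymp 2^m/\sqrt m$. For a sign vector $\omega\in\{0,1\}^{L}$ define
\[
f_\omega(x)=\tfrac12+\gamma\bigl(\one\{|x|_{[m]}>m/2\}-\one\{|x|_{[m]}<m/2\}\bigr)+\gamma\,\omega_{x_{[m]}}\one\{x_{[m]}\in L\},
\]
which is monotone for $\gamma\leqslant1/4$ because the layer value stays between the values below and above it. Its influence is $\asymp\gamma\cdot(\text{boundary edges})/2^m\asymp\gamma\sqrt m$, so $\gamma\asymp K/\sqrt m$ is needed for $I\leqslant K$.

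By Varshamov--Gilbert there are $2^{c|L|}$ elements $\omega$ with pairwise Hamming distance at least $|L|/8$. Their $L^2$ separation is $\gamma^2|L|/2^m\asymp\gamma^2/\sqrt m$, and the pairwise KL divergence is at most $n\gamma^2|L|/(2^m\sigma^2)$. Fano's inequality requires this KL to be $\lesssim|L|$, which holds when $2^m\gtrsim n\gamma^2$, i.e. $m\asymp\log n$. The resulting risk is $\gamma^2/\sqrt m\asymp K^2/(\log n)^{3/2}$.

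The constraint $K\leqslant c\sqrt{\log n}$ keeps $\gamma\leqslant1/4$, and $d\geqslant m$ is needed for the embedding.
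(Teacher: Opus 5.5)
Your upper bound has a genuine gap in the choice of the screening threshold $\tau$, and that choice is what determines the rate. With $\tau=n^{-\delta}$ for a fixed $\delta>0$ and $k\asymp\sqrt{\log n}$, your own count gives
\[
  \frac{1}{n}\Bigl(\frac{eK}{\tau k}\Bigr)^{k}
  \;\geqslant\;\frac{1}{n}\Bigl(\frac{e\,n^{\delta}}{k}\Bigr)^{k}
  \;=\;\exp\bigl(\Theta((\log n)^{3/2})\bigr).
\]
So the variance bound your argument relies on diverges instead of being $n^{-\Omega(1)}$. The coefficient thresholding you mention plays no role in this count.

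The two error terms pull in opposite directions:
\begin{itemize}
\item The junta term, which the $4/3\to2$ hypercontractive step delivers as $K\,3^{k}\tau^{1/2}$ (so $c=1/2$), vanishes only if $\log(1/\tau)>2k\log 3$.
\item The variance count requires $k\log(1/\tau)\leqslant(1+o(1))\log n$.
\end{itemize}
Together with $k\gtrsim\sqrt{\log n}$, which the degree-truncation error $K/k$ demands, this forces $\log(1/\tau)\asymp k\asymp\sqrt{\log n}$, i.e.\ a sub-polynomial threshold. The paper couples the two parameters as follows:
\begin{itemize}
\item $\tau=e^{-\gamma k}$ with $\gamma>2\log 3$, so the junta term is $\lesssim K(3e^{-\gamma/2})^{k}=e^{-\Omega(\sqrt{\log n})}$ for $K\leqslant\sqrt{\log n}$;
\item $k=\lceil\sqrt{(\log n-c_0\sqrt{\log n})/\gamma}\,\rceil$, so $\tau^{-k}=e^{\gamma k^{2}}\leqslant n\,e^{-(c_0-2\sqrt{\gamma})\sqrt{\log n}+\gamma}$, which leaves room to absorb $(eK/k)^{k}$.
\end{itemize}
This coupling is exactly what caps $k$ at order $\sqrt{\log n}$, and hence the rate at $K/\sqrt{\log n}$. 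Since $\tau$ is then $n^{-o(1)}$, the condition $\log d\leqslant n^{1-\varepsilon}$ makes the uniform influence-estimation error, of order $n^{-\varepsilon/2}$, negligible against $\tau$ for every $\varepsilon$, as you intended.

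Two smaller repairs are also needed.
\begin{itemize}
\item \emph{Large $K$ and the bad event.} The factor $(eK/k)^{k}$ is harmless only when $K\lesssim k$. So the regime $K>\sqrt{\log n}$ must be handled separately, as must the event on which the influence screen fails. The paper clips $\hat f_n$ to $[0,1]$: the risk is then at most $1\leqslant K/\sqrt{\log n}$ in that regime, and the bad event costs at most its probability. Your estimator is not clipped.
\item \emph{The hypercontractive step.} Apply $\|T_{1/\sqrt3}g\|_2\leqslant\|g\|_{4/3}$ to $g=\Delta_i f$ itself, not to a low-degree truncation, which loses the range $[0,1]$. What monotonicity and $f\in[0,1]$ supply is $0\leqslant\Delta_i f\leqslant 1$. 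This gives $\|\Delta_i f\|_{4/3}^{4/3}\leqslant I_i(f)$, and hence Fourier weight $\lesssim 3^{k}I_i(f)^{3/2}$ on sets of size at most $k$ containing $i$. The identity $\hat f(\{i\})=I_i(f)/2$ is not what is used.
\end{itemize}
With these changes your upper bound is essentially the paper's.

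Your lower bound is correct and is essentially the paper's construction: the middle layer of a $\Theta(\log n)$-dimensional subcube, amplitude $\asymp K/\sqrt{m}$, Varshamov--Gilbert, and Fano. The extra majority-type jump only changes constants. Keeping the Gaussian KL as $n\|f_\omega-f_{\omega'}\|_2^2/(2\sigma^2)$, rather than bounding it through the sup norm, even lets you take $m\approx\log_2 n$ instead of $2\log_2 n$.
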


This result calls for several comments.  The condition $K\geqslant 1$
is a normalization: for very small~$K$, the rate $K/\sqrt{\log n}$
vanishes and the estimation problem becomes degenerate.  The
requirement $\log d\leqslant n^{1-\varepsilon}$ is extremely mild:
for any fixed $\varepsilon\in(0,1)$, it allows~$d$ as large as
$\exp(n^{1-\varepsilon})$, far beyond any practical scenario.  The
estimator $\hat f_n$ does not depend on~$K$ and
achieves~\eqref{eq:upper} simultaneously for all
$1\leqslant K\leqslant d$, without prior knowledge of the influence
budget.  The lower bound is stated for $K\leqslant c\sqrt{\log n}$;
beyond this regime, the upper bound $CK/\sqrt{\log n}$ exceeds a
constant, and the estimation problem is inherently limited by the
size of the class~$\cF_K$.  The condition $d\geqslant(1/c)\log n$
is a mild dimensionality requirement: the lower bound construction
uses $\Theta(\log n)$ coordinates, so $d$ must be at least of this
order for the packing to be feasible.  

The rate $K/\sqrt{\log n}$ depends on~$d$ only through the mild
condition $\log d\leqslant n^{1-\varepsilon}$, and is otherwise
dimension-free.  This stands in contrast with the classical theory
of multivariate isotonic regression on~$[0,1]^d$, where
\citet{han2019} showed that the minimax rate is~$n^{-1/d}$ (up to
logarithmic factors)---a rate that becomes uninformative for
$d\gtrsim\log n$.  The influence constraint thus provides a
mechanism for meaningful estimation even when $d$ grows nearly
exponentially in~$n$.

The constant estimator $\hat f\equiv\bar Y$ achieves
$R(\bar Y,f)\leqslant\Var(f)+O(\sigma^2/n)\leqslant
K/4+O(\sigma^2/n)$, where $\Var(f):=\E_\mu[(f(X)-\E_\mu f(X))^2]$
denotes the variance of~$f$ under~$\mu$, and we used
$\Var(f)\leqslant I(f)/4$
(see, e.g., \citealt[Chapter~2]{odonnell2014}).  This
risk is bounded away from zero for fixed~$K$: the dictator
$f(x)=x_1\in\cF_1$ has $\Var(f)=1/4$, so
$R(\bar Y,f)\geqslant 1/4$ for all~$n$.  In contrast, our
estimator $\hat f_n$ achieves $R(\hat f_n,f)\leqslant
CK/\sqrt{\log n}$, which tends to zero as $n\to\infty$ for
every fixed~$K$---the estimation problem is genuinely
consistent, uniformly over~$\cF_K$.

\subsection{Adaptive minimax bounds}\label{ssec:adaptive}

For fixed~$K$, the upper and lower bounds in
Theorem~\ref{thm:main} differ by a factor of~$\log n$. This
apparent gap, however, conceals a finer structure: the upper bound is largest on \emph{near-Boolean}
functions~$f$ for which $I^{(2)}(f)\approx I(f)\approx K$, whereas the lower bound construction achieves
$I^{(2)}(f_\omega)\lesssim K^2/\sqrt{\log n}$
(see Section~\ref{sec:lower}).  Stratifying
$\cF_K$ by the value of $I^{(2)}(f)$ reveals that the actual gap
on each stratum is only~$\sqrt{\log n}$, and that the apparent
$\log n$ gap on~$\cF_K$ is an aggregation artifact.

\begin{theorem}[Adaptive minimax bounds]\label{thm:adaptive}
  For every $\varepsilon\in(0,1)$, there exist constants $c,C>0$
  (depending only on $\sigma$ and~$\varepsilon$) such that the
  following holds for all integers $n\geqslant 2$ and $d\geqslant 1$.
  For $K>0$ and $B\in(0,K]$, define the sub-class
  \begin{equation*}
    \cF_{K,B}\;:=\;\bigl\{f\in\cF_K\,:\,I^{(2)}(f)\leqslant B\bigr\}.
  \end{equation*}
  \begin{enumerate}[label=(\roman*)]
  \item \textbf{Upper bound.}\; For every $1\leqslant K\leqslant\sqrt{\log n}$,
  $1\leqslant B\leqslant K$, and $\log d\leqslant n^{1-\varepsilon}$,
  the estimator $\hat f_n$ of Theorem~\ref{thm:main}(i) satisfies
  \begin{equation*}
    \sup_{f\in\cF_{K,B}}R(\hat f_n,f)
    \;\leqslant\; C\,\frac{B}{\sqrt{\log n}}.
  \end{equation*}
  \item \textbf{Lower bound.}\; For every $K\leqslant c\sqrt{\log n}$,
    $B\in(0,K^2/\sqrt{\log n}]$, and $d\geqslant(1/c)\log n$,
    \begin{equation}\label{eq:adaptive-lower}
      \inf_{\tilde f}\;\sup_{f\in\cF_{K,B}}R(\tilde f,f)
      \;\geqslant\; c\,\frac{B}{\log n},
    \end{equation}
    where the infimum is over all estimators $\tilde f=\tilde f(\cD_n)$.
  \end{enumerate}
\end{theorem}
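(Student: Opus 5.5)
Both parts will reuse the machinery of Theorem~\ref{thm:main}, refining each step so that $I^{(2)}(f)$ appears where $I(f)$ appeared.

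\textbf{Upper bound.} The estimator $\hat f_n$ of Theorem~\ref{thm:main}(i) is a Fourier thresholding estimator. It first selects coordinates whose estimated $L^1$-influence, a difference of conditional means by~\eqref{eq:inf-means}, exceeds a threshold $\tau$. It then estimates all Fourier coefficients $\hat f(S)$ for $S$ inside the selected set $J$ with $|S|\leqslant k$, where $k\asymp\sqrt{\log n}$ is chosen so that the number of such coefficients is $n^{o(1)}$.

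The risk will be split as
\[
R\leqslant \sum_{|S|>k}\hat f(S)^2+\sum_{S\not\subseteq J}\hat f(S)^2+\text{(variance)}.
\]
\begin{itemize}
\item The high-degree tail obeys the Parseval identity $I^{(2)}(f)=\sum_S|S|\hat f(S)^2$, valid for the $\{0,1\}$ discrete derivative with the normalization $\Delta_i$, up to a constant factor 4. Hence it is at most $I^{(2)}(f)/k\leqslant C B/\sqrt{\log n}$. This is exactly where $B$ replaces $K$; in Theorem~\ref{thm:main} the bound used $I^{(2)}\leqslant I$.
\item The omitted-coordinate term is at most $\sum_{i\notin J}I_i^{(2)}(f)$.
\item The variance term is roughly $|J|^{k}\,\mathrm{polylog}/n$. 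Since $|J|\leqslant K/\tau$ with high probability and $K\leqslant\sqrt{\log n}$, this is $n^{-1+o(1)}$, which is negligible.
\end{itemize}

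The main obstacle is the omitted-coordinate term. Applying~\eqref{eq:L1-L2} to each unselected coordinate gives $\sum_{i\notin J}I_i^{(2)}\leqslant\sum_{i:I_i\lesssim\tau}I_i$. This is controlled by $K$, not $B$, and the gap must be closed. I would try two fixes.
\begin{itemize}
\item First, use the left inequality of~\eqref{eq:L1-L2} the other way. Coordinates with small $L^1$-influence have $I_i^{(2)}\leqslant I_i$, but I would also invoke a Talagrand/Friedgut-type estimate, as in the spectral concentration result used for Theorem~\ref{thm:main}, applied to $I^{(2)}$. The mass on low-degree sets containing a coordinate of small influence is then bounded by roughly $\tau^{c}\cdot I^{(2)}(f)\,\mathrm{polylog}$, and choosing $\tau=n^{-\varepsilon'}$ makes this $o(B/\sqrt{\log n})$.
\item Second, if that fails, lower the threshold to $\tau = n^{-1/2+\varepsilon/2}$. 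Then unselected coordinates have $I_i^{(2)}\leqslant I_i\leqslant\tau$, contributing at most $d\tau$, which is not uniform in $d$. So I would instead bound their low-degree contribution via the spectral concentration lemma of Theorem~\ref{thm:main}, which I assume already yields a bound proportional to $I^{(2)}$ on the degree-$\leqslant k$ part.
\end{itemize}

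\textbf{Lower bound.} I reuse the Varshamov--Gilbert packing of Section~\ref{sec:lower} on the middle layer of $m\asymp\log n$ coordinates, with functions $f_\omega=\tfrac12+\delta\sum_{x\in\text{middle layer}}\omega_x\,g_x$ built from monotone local bumps. I rescale the amplitude so that $I^{(2)}(f_\omega)\leqslant B$ while keeping $I(f_\omega)\leqslant K$. In the original construction $I^{(2)}\asymp K^2/\sqrt{\log n}$ and the separation is $\asymp K^2/(\log n)^{3/2}=I^{(2)}/\log n$.

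Since $I^{(2)}$ scales quadratically in the amplitude, I will take the amplitude so that $I^{(2)}(f_\omega)=B\leqslant K^2/\sqrt{\log n}$. This is admissible precisely under the stated constraint on $B$, and $I$ then stays below $K$. The pairwise separation is $\|f_\omega-f_{\omega'}\|_2^2\gtrsim B/\log n$. The KL divergences scale as $n\cdot\mathrm{amplitude}^2\cdot 2^{-m}\cdot\binom{m}{m/2}/\sigma^2$, which is smaller than for the original construction. Therefore Fano's inequality, with packing size $\exp(c\binom{m}{m/2})$, still applies and yields~\eqref{eq:adaptive-lower}.
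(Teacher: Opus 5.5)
Your lower bound follows the paper. You keep the middle-layer Varshamov--Gilbert family and rescale the amplitude to $\beta_B^2\asymp B/\sqrt{s}$. Since $\Delta_i f_\omega\in\{0,\beta_B\}$, you get $I^{(2)}(f_\omega)=\beta_B I(f_\omega)\lesssim\beta_B^2\sqrt{s}$. You check $I(f_\omega)\lesssim\sqrt{B\sqrt{s}}\lesssim K$ from $B\leqslant K^2/\sqrt{\log n}$, and conclude from the squared separation $\gtrsim B/s$ and the unchanged Fano condition. Your explicit formula $\tfrac12+\delta\sum_x\omega_x g_x$ with ``monotone local bumps'' does not describe that family, since no nonzero monotone function is supported on a single middle-layer point. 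The paper's $f_\omega$ is $0$ below the layer, $\beta_B$ above it, and $\beta_B\omega_a$ on it. Your scaling argument is nonetheless the right one.

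The gap is in the upper bound. The ``main obstacle'' you identify is not an obstacle, and the missing idea is simply to use the hypotheses of part~(i). With the estimator's own parameters $\delta=e^{-\gamma d_0}$, $\gamma>2\log 3$, $d_0\asymp\sqrt{\log n}$, Proposition~\ref{prop:spectral}(ii) bounds the omitted low-degree mass by $\frac{K}{12}3^{d_0}\delta^{1/2}=\frac{K}{12}(3e^{-\gamma/2})^{d_0}$. This is proportional to $K$, as you feared, but it is exponentially small in $\sqrt{\log n}$. Since $K\leqslant\sqrt{\log n}$, it is at most $Ce^{-c\sqrt{\log n}}$, and since $B\geqslant 1$, it is absorbed into $CB/\sqrt{\log n}$. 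Add your high-degree bound $I^{(2)}(f)/(4d_0)\leqslant B/(4d_0)$ and a variance of order $e^{-\sqrt{\log n}}$, and that is the whole proof (Theorem~\ref{thm:upper}(ii)). There is no need to make the omitted-coordinate term proportional to $B$. You never use $B\geqslant 1$, which is exactly what makes this absorption work.

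Your two proposed fixes do not go through.
\begin{itemize}
\item The estimator is fixed, so the threshold is $\delta/2=e^{-\Theta(\sqrt{\log n})}$; you cannot set it to $n^{-\varepsilon'}$ or $n^{-1/2+\varepsilon/2}$. If you did, $|J|$ could be of order $1/\tau$, and the resulting $n^{\Theta(\sqrt{\log n})}$ coefficients of degree $\leqslant k\asymp\sqrt{\log n}$ would destroy the variance bound.
\item The hypercontractive step carries a factor $3^{k}=e^{\Theta(\sqrt{\log n})}$, not a polylog. It naturally yields $\sum_{i\notin J}I_i(f)^{3/2}\leqslant\tau^{1/2}K$, which is a $K$-dependence.
\item The lemma you assume in the second fix, a low-degree bound proportional to $I^{(2)}(f)$, is not what Proposition~\ref{prop:spectral} provides.
\end{itemize}
For the same reason your variance accounting is off. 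Killing $3^k\tau^{1/2}$ forces $\log(1/\tau)\gtrsim k$, so the spectral set can have size $e^{\Theta(k^2)}=n^{\Theta(1)}$, not $n^{o(1)}$. This trade-off is what pins down $d_0\approx\sqrt{(\log n-c_0\sqrt{\log n})/\gamma}$ in the paper. The resulting variance is $O(e^{-\sqrt{\log n}})$: negligible against $B/\sqrt{\log n}$, but not $n^{-1+o(1)}$.
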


Part~(i) follows immediately from the refined bound
Theorem~\ref{thm:upper}(ii), since every $f\in\cF_{K,B}$
satisfies $I^{(2)}(f)\leqslant B$.  Part~(ii) is proved in
Section~\ref{sec:lower} by a variant of the lower bound
construction of Theorem~\ref{thm:main}(ii), where the scaling
parameter is chosen to saturate the constraint
$I^{(2)}(f)\leqslant B$ rather than $I(f)\leqslant K$.

The two bounds differ by a factor~$\sqrt{\log n}$, uniformly
over $B\in(0,K^2/\sqrt{\log n}]$.  Note that
$K^2/\sqrt{\log n}\leqslant K$ under the assumption
$K\leqslant c\sqrt{\log n}$ with $c\leqslant 1$, so the constraint $B\leqslant K^2/\sqrt{\log n}$ in~(ii) is
compatible with the definition $B\in(0,K]$ of the sub-class.

The original lower bound of Theorem~\ref{thm:main}(ii) is
recovered as the boundary case $B=K^2/\sqrt{\log n}$:
\[
  \inf_{\hat f}\sup_{f\in\cF_K}R(\hat f,f)
  \;\geqslant\;
  \inf_{\hat f}\sup_{f\in\cF_{K,\,K^2/\sqrt{\log n}}}R(\hat f,f)
  \;\geqslant\;
  c\,\frac{K^2}{(\log n)^{3/2}}.
\]
Closing the
remaining $\sqrt{\log n}$ gap on~$\cF_{K,B}$ is an open problem
discussed in Section~\ref{sec:discussion}.

\subsection{Notation and organization}\label{ssec:org}

Section~\ref{sec:sparsity} discusses the three roles of the
$L^1$-influence, its analogy with soft sparsity in linear
regression, and related work.
Section~\ref{sec:prelim} develops the Fourier analysis on the
hypercube and establishes the spectral concentration result for
monotone functions under an influence budget.
Section~\ref{sec:upper} constructs the estimator and proves the
upper bounds (Theorem~\ref{thm:main}(i) and
Theorem~\ref{thm:adaptive}(i)).
Section~\ref{sec:lower} constructs the lower bound families and
proves the lower bounds (Theorem~\ref{thm:main}(ii) and
Theorem~\ref{thm:adaptive}(ii)).
Section~\ref{sec:discussion} discusses the gap between the
bounds, extensions, and open problems.

\section{The $L^1$-influence as soft sparsity}\label{sec:sparsity}

The estimator achieving Theorem~\ref{thm:main}(i) exploits three
distinct properties of the $L^1$-influence, which together make
$I(\cdot)$ the right complexity measure for our problem.

\smallskip
\emph{Statistical tractability.}\;
By identity~\eqref{eq:inf-means}, each $I_i(f)$ is a simple
difference of conditional means, directly estimable from data at
parametric rate~$n^{-1/2}$, uniformly in~$i$.  This stands in
contrast with the $L^2$-influence $I_i^{(2)}(f)=\E[(\Delta_i
f(X))^2]$, which involves $f$ at two points simultaneously and
admits no such plug-in estimator.

\smallskip
\emph{Analytical bridge.}\;
Every $f:\Hd\to\R$ admits an expansion $f=\sum_{S\subseteq[d]}
\hat f(S)\,\chi_S$ in the orthonormal basis $\{\chi_S\}$ of
$L^2(\Hd,\mu)$ (the Fourier basis on the hypercube, recalled in
Section~\ref{ssec:fourier}).  For any $f\in\cF_K$, the Fourier
coefficients concentrate on low-degree subsets of the influential
coordinates of~$f$ (Proposition~\ref{prop:spectral}), in the spirit 
of Friedgut's junta theorem \citep{friedgut1998}.  The bridge
$I^{(2)}(f)\leqslant I(f)\leqslant K$ (see~\eqref{eq:L1-L2})
allows the $L^2$-tools of Boolean analysis to apply to the
$L^1$-constrained class~$\cF_K$, and the summation over
non-influential coordinates yields a factor~$K$ rather than~$d$
in the spectral bound, giving dimension-free control.

\smallskip
\emph{Budget control and estimation.}\;
The constraint $\sum_i I_i(f)\leqslant K$, combined with a
threshold on the estimated influences, limits the number of
selected coordinates to at most~$O(K/\delta)$ for a
threshold~$\delta$.  This keeps manageable the set of Fourier
coefficients to be estimated.  Our estimator is then simply
constructed by estimating each coefficient in this set by its
empirical counterpart, and truncating to~$[0,1]$.  The
bias-variance trade-off in the threshold~$\delta$ yields the
rate~$K/\sqrt{\log n}$.  Under an $L^2$-influence constraint,
the analogous bound on the number of selected coordinates would
be quadratically worse, leading to a much larger estimation set.

\medskip
The triple role of the $L^1$-influence---statistical
tractability, analytical bridge, and budget control---parallels
the role of the $\ell_1$ norm in high-dimensional linear
regression, where the constraint $\|\beta\|_1\leqslant K$
simultaneously enables support estimation, provides approximation
guarantees, and controls the complexity of the estimator.  The
analogy is summarized in the table below.

\begin{center}
\renewcommand{\arraystretch}{1.3}
\begin{tabular}{@{}lll@{}}
  & \textbf{Linear model} & \textbf{Monotone model on~$\Hd$}
  \\[2pt]
  Exact sparsity & $\|\beta\|_0\leqslant s$ & $f$ is an $s$-junta \\
  Soft sparsity & $\|\beta\|_1\leqslant K$ & $I(f)\leqslant K$ \\
  Key implication &
    $\|\beta\|_1\leqslant K\Rightarrow\beta\approx\text{sparse}$ &
    $I(f)\leqslant K\Rightarrow f\approx\text{junta}$ \\
  Structural result & $\ell_1$-ball geometry &
    Spectral concentration \\
  & & (Friedgut-type) \\
  Complexity measure & estimable at rate~$n^{-1/2}$ &
    estimable at rate~$n^{-1/2}$ \\
  Minimax rate & $K\sqrt{(\log d)/n}$ &
    $K/\sqrt{\log n}$ (this paper)
\end{tabular}
\end{center}

\noindent
In both settings, the soft constraint does not impose exact
low-dimensional structure but guarantees approximate
low-dimensionality, which suffices for estimation.  The analogy
extends to the proof strategy: influence estimation parallels
support recovery in the Lasso, and Fourier thresholding
parallels soft thresholding of regression coefficients.

\medskip\noindent\textbf{Related work.}\;
\leavevmode\par
\vspace{0.3em}

\emph{Isotonic regression}.
Estimation of monotone functions is a classical topic in
nonparametric statistics.  In dimension~$1$,
\citet{brunk1955} introduced the isotonic least squares estimator,
with $L^2$ risk of order~$n^{-2/3}$ \citep{vaneeden1958,brunk1970};
see \citet{groeneboom2014} for a modern treatment.  For
multivariate isotonic regression on~$[0,1]^d$,
\citet{chatterjee2015} established risk bounds for the least squares
estimator over partially ordered domains, and \citet{han2019} determined the minimax
rate: $n^{-1/d}$ up to logarithmic factors, which degrades
rapidly with~$d$.  On the Boolean hypercube, the influence constraint provides an
alternative mechanism for controlling the complexity of the
estimation problem, even when $d\gg\log n$.

\smallskip
\emph{Sparse nonparametric regression}.
Exploiting low-dimensional structure in high-dimen\-sional
nonparametric problems has been extensively studied under exact
sparsity, where the target depends on $s\ll d$ unknown coordinates
\citep{yang1999,kpotufe2011,comminges2012}.  Our work differs in
that $I(f)\leqslant K$ does not require~$f$ to depend on exactly~$s$
variables---it merely implies proximity to a junta. 
This is a strictly weaker assumption: Tribes, for instance,
depends on all~$d$ coordinates yet satisfies
$I=\Theta(\log d)$, and Majority satisfies
$I=\Theta(\sqrt{d})$.

\smallskip
\emph{Monotone classification}.
Learning monotone classifiers from binary data has received
attention in applied machine learning, including monotone decision
trees \citep{ben-david1995,potharst1999,feelders2010} and ensemble
methods \citep{gonzalez2015}.  These works focus on algorithms and
empirical performance rather than minimax rates.

\smallskip
\emph{Analysis of Boolean functions.}
The influence of a variable was introduced by \citet{kahn1988},
who proved that every Boolean function on~$\Hd$ has a
coordinate with influence $\Omega(\log d/d)$ (the KKL
inequality); for Boolean functions, the $L^1$- and
$L^2$-influences coincide.  \citet{friedgut1998} showed that
bounded total $L^2$-influence implies proximity to a junta;
extensions to monotone functions on general product spaces
were obtained by \citet{friedgut2004}, and to general
(not necessarily monotone) functions by \citet{hatami2009},
who showed that small total influence implies proximity to
a decision tree even without monotonicity.  The connection
between influences and sharp thresholds is surveyed
in~\citet{garban2014}.  We use spectral concentration
arguments originating in Friedgut's work, adapted to our
statistical setting (monotone real-valued functions under an
$L^1$-influence budget), where the distinction between $L^1$-
and $L^2$-influences is essential.

\smallskip
\emph{Learning juntas}.
PAC learnability of monotone functions and juntas has been studied
by \citet{bshouty1996}, \citet{mossel2003}, and
\citet{odonnell2007}, with a focus on computational complexity.
Our results are information-theoretic and establish minimax rates
without computational constraints.

\section{Fourier analysis and spectral concentration}\label{sec:prelim}

This section develops the analytical tools used in the proofs.
Section~\ref{ssec:fourier} recalls the Fourier expansion on the
Boolean hypercube, following \citet{odonnell2014} (transposed from
$\{-1,1\}^d$ to $\{0,1\}^d$ via $x_i\leftrightarrow 2x_i-1$).
Section~\ref{ssec:spectral} establishes a spectral concentration
result for monotone functions under an influence budget.

\subsection{Fourier expansion on the hypercube}\label{ssec:fourier}

For each $S\subseteq[d]$, define the character
$\chi_S:\Hd\to\{-1,+1\}$ by
\begin{equation*}
  \chi_S(x) := \prod_{i\in S}(2x_i-1),
  \qquad \chi_\emptyset\equiv 1.
\end{equation*}
The system $\{\chi_S\}_{S\subseteq[d]}$ forms an orthonormal basis
of $L^2(\Hd,\mu)$, and every $f:\Hd\to\R$ admits the Fourier
expansion
\begin{equation*}
  f(x) = \sum_{S\subseteq[d]}\hat f(S)\,\chi_S(x),
  \qquad
  \hat f(S) := \E_\mu[f(X)\,\chi_S(X)].
\end{equation*}
Parseval's identity gives
$\|f\|_2^2=\sum_{S\subseteq[d]}\hat f(S)^2$ and
$\Var(f)=\sum_{S\neq\emptyset}\hat f(S)^2$.

\medskip\noindent\textbf{Fourier representation of influences.}\;
For any $f:\Hd\to\R$ and $i\in[d]$, the discrete derivative
$\Delta_i f(x)=f(x^{i\to 1})-f(x^{i\to 0})$ satisfies
\begin{equation}\label{eq:deriv-fourier}
\Delta_i f(x) = 2\sum_{S\ni i}\hat f(S)\,\chi_{S\setminus\{i\}}(x).
\end{equation}
Indeed, for $i\notin S$ the character $\chi_S$ does not depend on
$x_i$, so $\chi_S(x^{i\to 1})=\chi_S(x^{i\to 0})$.  For $i\in S$,
writing $\chi_S=\chi_{\{i\}}\,\chi_{S\setminus\{i\}}$ and using
$\chi_{\{i\}}(x^{i\to 1})-\chi_{\{i\}}(x^{i\to 0})=1-(-1)=2$ gives
the result.

Squaring~\eqref{eq:deriv-fourier} and taking expectations (using
the orthonormality of the characters), we obtain
\begin{equation}\label{eq:inf2-fourier}
  I_i^{(2)}(f) = 4\sum_{S\ni i}\hat f(S)^2.
\end{equation}
For monotone~$f$, taking expectations
of~\eqref{eq:deriv-fourier} directly and noting that only the
term $S=\{i\}$ survives (since $\E_\mu[\chi_{S\setminus\{i\}}]=0$ for $S\neq\{i\}$), we get
\begin{equation*}
  I_i(f) = \E[\Delta_i f(X)] = 2\hat f(\{i\})\geqslant 0,
\end{equation*}
so every first-order Fourier coefficient satisfies
$\hat f(\{i\})\geqslant 0$. Summing~\eqref{eq:inf2-fourier} over~$i$, 
we also note that 
\begin{equation}\label{eq:totinf-fourier}
  I^{(2)}(f) = 4\sum_{S\neq\emptyset}|S|\,\hat f(S)^2.
\end{equation}

Finally, we recall the noise operator and the hypercontractive
inequality, which are central to the proof of spectral
concentration in Section~\ref{ssec:spectral}.  For $\rho\in[0,1]$,
the noise operator $T_\rho$ acts on $f:\Hd\to\R$ by
$T_\rho f(x)=\sum_{S\subseteq[d]}\rho^{|S|}\hat f(S)\,\chi_S(x)$
(it damps each Fourier coefficient by the factor
$\rho^{|S|}$).

\begin{theorem}[\citealt{bonami1970,beckner1975}]
  \label{thm:bonami}
  For $1\leqslant p\leqslant q$ and
  $0\leqslant\rho\leqslant\sqrt{\frac{p-1}{q-1}}$, every
  $f:\Hd\to\R$ satisfies
  $\|T_\rho f\|_{q}\leqslant\|f\|_{p}$,
  where $\|g\|_{r}:=(\E_\mu[|g(X)|^r])^{1/r}$.
\end{theorem}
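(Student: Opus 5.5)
The plan is the classical one: prove the inequality on the one-dimensional cube $\{0,1\}$ (the ``two-point inequality'') and then tensorize by induction on~$d$.

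\textbf{Trivial cases.} If $p=1$, the condition forces $\rho=0$. Then $T_0f\equiv\E_\mu f$, and $\|T_0 f\|_q=|\E_\mu f|\leqslant\|f\|_1$, so this case is immediate. For the remaining cases ($p>1$) I will use the semigroup property $T_{\rho}T_{\rho'}=T_{\rho\rho'}$ together with the contractivity $\|T_\rho g\|_r\leqslant\|g\|_r$. The latter holds because $T_\rho$ is an average of $g$ over a random perturbation of its argument, and the $L^r$ norm is convex. These two facts show it suffices to treat the extremal value $\rho=\sqrt{(p-1)/(q-1)}$ with $1<p<q$. The case $p=q$ is just contractivity.

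\textbf{Step 1 (two-point inequality).} For $d=1$ write $f(x)=a+b\,\chi_{\{1\}}(x)$, so $T_\rho f=a+\rho b\,\chi_{\{1\}}$, and $\chi_{\{1\}}(X)$ is a uniform sign $\epsilon$. We need
\[
\bigl(\E|a+\rho b\epsilon|^q\bigr)^{1/q}\leqslant\bigl(\E|a+b\epsilon|^p\bigr)^{1/p}.
\]
By homogeneity and the symmetry $\epsilon\mapsto-\epsilon$ we may take $a=1$ and $b\geqslant 0$.

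When $|b|>1$ the roles of $a$ and $b$ are not symmetric in the left-hand side. I would first reduce to $|b|\leqslant 1$ using the fact that $\E|a+b\epsilon|^p$ is unchanged under swapping $a\leftrightarrow b$, together with the monotonicity of the left side in $|a|$. If that reduction proves awkward, the fallback is the standard trick of proving the inequality for $|b|<1$ and extending by continuity and a convexity argument in~$b$.

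For $0\leqslant b<1$, expand both sides with the generalized binomial series, keeping only even powers:
\[
\E(1+b\epsilon)^p=\sum_{k\geqslant0}\binom{p}{2k}b^{2k}.
\]
Raise to the power $q/p$ and compare termwise with $\sum_k\binom{q}{2k}\rho^{2k}b^{2k}$.
\begin{itemize}[label={}]
\item Both sides equal $1$ at order $b^0$.
\item At order $b^2$ the two sides agree exactly when $\rho^2=(p-1)/(q-1)$: both give $q(q-1)\rho^2/2=q(p-1)/2$.
\item For higher orders one uses $(1+x)^{\theta}\geqslant1+\theta x$ with $\theta=q/p\geqslant1$ to lower-bound the right side. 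It then remains to check the coefficientwise inequality $\binom{q}{2k}\rho^{2k}\leqslant\frac{q}{p}\binom{p}{2k}$ for $k\geqslant2$. This reduces to $\prod_{j=2}^{2k-1}\frac{q-j}{p-j}\cdot\rho^{2k-2}\leqslant\frac{p-1}{q-1}\cdot(\text{const})$, verified factor by factor, with care for sign changes when $p<2k$.
\end{itemize}

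\textbf{Step 2 (tensorization).} Assume the result holds on $\{0,1\}^{d-1}$ and write $x=(x',x_d)$. Since $T_\rho=T^{(d)}_\rho T'_\rho$ acts coordinatewise, the argument runs as follows.
\begin{enumerate}[label=(\alph*)]
\item Apply the two-point inequality in $x_d$ for each fixed $x'$, applied to the function $x_d\mapsto T'_\rho f(x',x_d)$.
\item Use Minkowski's integral inequality in the form $\bigl\|\,\|g\|_{L^p(x_d)}\bigr\|_{L^{q/p}(x')}\leqslant\bigl\|\,\|g\|_{L^{q/p}(x')}\bigr\|_{L^p(x_d)}$. This is valid because $q/p\geqslant1$.
\item Apply the induction hypothesis in $x'$ for each fixed $x_d$.
\end{enumerate}
This chain yields $\|T_\rho f\|_q\leqslant\|f\|_p$ on $\{0,1\}^d$.

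\textbf{Main obstacle.} The main obstacle is Step 1, specifically the higher-order coefficient comparison and the case $|b|>|a|$. Everything else is routine.
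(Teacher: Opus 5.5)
The paper quotes this result from Bonami and Beckner without proof, so I compare your attempt with the standard argument. Your overall architecture is the standard one and is sound: reduction to the extremal $\rho$ via $T_\rho T_{\rho'}=T_{\rho\rho'}$ and contractivity, then a two-point inequality, then tensorization via Minkowski. Step~1, however, has a genuine gap. Linearizing with $(1+x)^{q/p}\geqslant 1+\frac{q}{p}x$ and then comparing coefficients cannot work when $q>2$. The reason is that the intermediate inequality you would be proving, $\E(1+\rho b\epsilon)^q\leqslant 1+\frac{q}{p}\bigl(\E(1+b\epsilon)^p-1\bigr)$, is itself false there. For $p=2$, $q=4$, $\rho^2=1/3$, the left side is $1+2b^2+b^4/9$ while the right side is $1+2b^2$; the $b^4$ you need sits in the discarded term $\binom{q/p}{2}x^2$. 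Correspondingly, $\binom{4}{4}\rho^4=1/9>0=\frac{4}{2}\binom{2}{4}$, so the coefficientwise claim fails at $k=2$. The same happens for, e.g., $p=3$, $q=4$, and no amount of care with signs repairs it.

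Your argument does work for $1<p<q\leqslant 2$. There $\binom{p}{2k},\binom{q}{2k}\geqslant 0$ for $k\geqslant 1$, since each has $2k-2$ negative factors. After cancelling $(q-1)\rho^2=p-1$, the claim becomes $\rho^{2k-2}\prod_{j=2}^{2k-1}(j-q)\leqslant\prod_{j=2}^{2k-1}(j-p)$, which holds because $0\leqslant j-q\leqslant j-p$ and $\rho\leqslant 1$. This range includes the case $p=4/3$, $q=2$ that the paper actually uses.

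The missing idea is to reduce the remaining exponents to this range; this can be done at the two-point level or directly on $\Hd$. For $2\leqslant p<q$, use duality. Since $T_\rho$ is self-adjoint on $L^2(\mu)$, the bound $\|T_\rho f\|_q\leqslant\|f\|_p$ for all $f$ is equivalent to $\|T_\rho g\|_{p'}\leqslant\|g\|_{q'}$ for all $g$. Here $1<q'<p'\leqslant 2$ are the H\"older conjugates, and $(q'-1)/(p'-1)=(p-1)/(q-1)$, so $\rho$ is unchanged. For $p<2<q$, factor through $L^2$ as $T_\rho=T_{1/\sqrt{q-1}}\,T_{\sqrt{p-1}}$. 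The $(p,2)$ bound comes from the range above, and the $(2,q)$ bound from duality.

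Two smaller repairs are also needed.
\begin{itemize}
\item For $|b|>|a|$, the fact you need is $\|a+\rho b\epsilon\|_q\leqslant\|b+\rho a\epsilon\|_q$ whenever $|a|\leqslant|b|$, not monotonicity in $|a|$ alone. This holds because $|a|+\rho|b|\leqslant|b|+\rho|a|$ and $\bigl||a|-\rho|b|\bigr|\leqslant|b|-\rho|a|$.
\item As written, the exponents in your Minkowski step are mixed up. It should read $\bigl\|\,\|g\|_{L^p(x_d)}\bigr\|_{L^q(x')}\leqslant\bigl\|\,\|g\|_{L^q(x')}\bigr\|_{L^p(x_d)}$, i.e.\ Minkowski with exponent $q/p$ applied to $|g|^p$.
\end{itemize}
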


\subsection{Spectral concentration under an influence budget}
\label{ssec:spectral}

The following result is the main analytical tool behind the upper
bound.  It shows that the Fourier spectrum of a monotone function
with bounded $L^1$-influence concentrates on low-degree subsets of
the influential coordinates, with explicit dimension-free bounds.
The proof adapts the strategy of \citet{friedgut1998}, who
established that Boolean functions with bounded $L^2$-influence
are well approximated by juntas, to our setting of monotone real-valued
functions under an $L^1$-influence constraint.  Two features are
specific to our setting.  First, the bound is stated in terms of the
$L^2$-influence $I^{(2)}(f)$ in the high-degree component
(Part~1 below), which yields the $K$-dependent uniform bound
through the bridge $I^{(2)}(f)\leqslant I(f)\leqslant K$ but is
strictly stronger in the \emph{fluid} regime $I^{(2)}(f)\ll K$.
Second, in the low-degree component (Part~2), the ambient
dimension~$d$ of the classical statement is replaced by the
influence budget~$K$, which is essential for dimension-free
estimation.

\begin{proposition}[Spectral concentration]\label{prop:spectral}
  Let $f:\Hd\to[0,1]$ be monotone with $I(f)\leqslant K$.   
  For any integer $d_0\geqslant 1$ and any $\delta>0$, define
  \begin{equation*}
    J := \bigl\{i\in[d] : I_i(f)\geqslant\delta\bigr\}
    \qquad\text{and}\qquad
    \cS := \bigl\{S\subseteq J : |S|\leqslant d_0\bigr\}.
  \end{equation*}
  Then:
  \smallskip
  \begin{enumerate}[label=(\roman*),nosep]
  \item $|J|\leqslant K/\delta$.
  \item $\displaystyle
    \sum_{S\notin\cS}\hat f(S)^2
    \;\leqslant\; \frac{I^{(2)}(f)}{4\,d_0}
    \;+\; \frac{K\cdot 3^{d_0}\cdot\delta^{1/2}}{12}$.
  \end{enumerate}
\end{proposition}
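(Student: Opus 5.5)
Part~(i) is immediate from the definition of~$J$: since $I_i(f)\geqslant0$ for every~$i$ by monotonicity,
\[
\delta|J|\leqslant\sum_{i\in J}I_i(f)\leqslant I(f)\leqslant K .
\]

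For part~(ii) I will follow Friedgut's strategy. Any $S\notin\cS$ with $S\neq\emptyset$ either satisfies $|S|>d_0$, or satisfies $|S|\leqslant d_0$ and $S\not\subseteq J$. In the second case $S$ contains some $i\notin J$.

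\emph{High-degree part.} By~\eqref{eq:totinf-fourier},
\[
\sum_{|S|>d_0}\hat f(S)^2\leqslant\frac{1}{d_0}\sum_{S}|S|\,\hat f(S)^2=\frac{I^{(2)}(f)}{4d_0}.
\]
Strictly speaking the inequality needs $|S|\geqslant d_0$, but $|S|>d_0$ only helps. The empty set belongs to~$\cS$, so it does not contribute.

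\emph{Low-degree part outside~$J$.} By a union bound over the offending coordinate,
\[
\sum_{\substack{|S|\leqslant d_0\\ S\not\subseteq J}}\hat f(S)^2\leqslant\sum_{i\notin J}\ \sum_{\substack{S\ni i\\|S|\leqslant d_0}}\hat f(S)^2 .
\]
For a fixed~$i$, write $g_i:=\Delta_i f$. By~\eqref{eq:deriv-fourier}, $\hat g_i(S\setminus\{i\})=2\hat f(S)$, so
\[
\sum_{\substack{S\ni i\\ |S|\leqslant d_0}}\hat f(S)^2=\frac14\sum_{|T|\leqslant d_0-1}\hat g_i(T)^2 .
\]
I now apply hypercontractivity (Theorem~\ref{thm:bonami}) with $\rho=1/\sqrt3$, $q=4$, and dually $p=4/3$, $q=2$, which gives $\|T_\rho g\|_2\leqslant\|g\|_{4/3}$. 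Then
\[
\sum_{|T|\leqslant d_0-1}\hat g_i(T)^2\leqslant 3^{d_0-1}\sum_T 3^{-|T|}\hat g_i(T)^2=3^{d_0-1}\|T_{1/\sqrt3}g_i\|_2^2\leqslant3^{d_0-1}\|g_i\|_{4/3}^2 .
\]
Since $g_i\in[0,1]$,
\[
\|g_i\|_{4/3}^2=\bigl(\E g_i^{4/3}\bigr)^{3/2}\leqslant(\E g_i)^{3/2}=I_i(f)^{3/2}.
\]
So coordinate~$i$ contributes at most $\tfrac14\,3^{d_0-1}I_i(f)^{3/2}$. For $i\notin J$ we have $I_i(f)^{3/2}\leqslant\delta^{1/2}I_i(f)$, and summing over $i\notin J$ gives at most
\[
\frac{3^{d_0-1}}{4}\,\delta^{1/2}\sum_i I_i(f)\leqslant\frac{3^{d_0}\,\delta^{1/2}K}{12},
\]
which is exactly the stated constant.

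The main obstacle is the low-degree step, specifically checking that hypercontractivity legitimately applies in the dual direction $p=4/3$, $q=2$, $\rho=\sqrt{(p-1)/(q-1)}=1/\sqrt3$. This is directly covered by Theorem~\ref{thm:bonami}, since it allows any $p\leqslant q$. The other point to get right is the interpolation $\E g_i^{4/3}\leqslant\E g_i$. This is where the $L^1$-influence rather than the $L^2$-influence enters, and it is valid because $0\leqslant g_i\leqslant1$ by monotonicity and the range $[0,1]$.
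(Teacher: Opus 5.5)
Your proof is correct and follows the paper's argument step for step. It uses the same split into high-degree terms, bounded via~\eqref{eq:totinf-fourier}, and low-degree terms containing a non-influential coordinate, the same $(4/3,2)$ Bonami--Beckner bound applied to the discrete derivative, and the same interpolation $\E[(\Delta_i f)^{4/3}]\leqslant I_i(f)$. The differences are cosmetic: you work with $\Delta_i f$ on $\{0,1\}^d$ rather than $\varphi_i=\Delta_i f/2$ on $\{0,1\}^{d-1}$, which yields the same factor $1/4$, and your aside about $q=4$ and duality is unnecessary because Theorem~\ref{thm:bonami} covers $(p,q)=(4/3,2)$ directly.
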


\begin{proof}
  Part~(i) is immediate: $\sum_{i\in J}I_i(f)\leqslant I(f)\leqslant K$
  and each term is $\geqslant\delta$, so $|J|\leqslant K/\delta$.

  For Part~(ii), we split the Fourier weight outside $\cS$ into
  high-degree terms and low-degree terms not supported on~$J$.

  \medskip\noindent
  \textbf{Part~1: High-degree terms.}\;
  By~\eqref{eq:totinf-fourier},
  \begin{equation}\label{eq:high-deg}
    \sum_{|S|>d_0}\hat f(S)^2
    \;\leqslant\;\frac{1}{d_0}\sum_{|S|>d_0}|S|\,\hat f(S)^2
    \;\leqslant\;\frac{I^{(2)}(f)}{4\,d_0}.
  \end{equation}

  \medskip\noindent
  \textbf{Part~2: Low-degree terms outside $J$.}\;
  Define
  $W:=\sum_{\substack{|S|\leqslant d_0,\;S\not\subseteq J}}\hat f(S)^2$.
  Every such $S$ contains some $i\notin J$, so
  $W\leqslant\sum_{i\notin J}W_i$ where
  $W_i:=\sum_{\substack{S\ni i,\;|S|\leqslant d_0}}\hat f(S)^2$.

 Fix $i\notin J$.  Since $\Delta_i f(x)=f(x^{i\to 1})-f(x^{i\to 0})$
  does not depend on~$x_i$, we may define
  $\varphi_i:\{0,1\}^{d-1}\to[0,\tfrac{1}{2}]$ by
  $\varphi_i(x_{-i}):=\Delta_i f(x)/2$, where
  $x_{-i}\in\{0,1\}^{d-1}$ denotes~$x$ with the $i$-th coordinate
  removed. Identity~\eqref{eq:deriv-fourier} then reads
$\varphi_i(x_{-i}) = \sum_{S\ni i}\hat f(S)\,
\chi_{S\setminus\{i\}}(x_{-i})$,
which is the Fourier expansion of~$\varphi_i$ on
$\{0,1\}^{d-1}$, with coefficients
$\widehat{\varphi_i}(S')=\hat f(S'\cup\{i\})$ for
$S'\subseteq[d]\setminus\{i\}$.
Therefore,
\begin{equation*}
  W_i = \sum_{|S'|\leqslant d_0-1}\widehat{\varphi_i}(S')^2.
\end{equation*}
  
We apply Theorem~\ref{thm:bonami} with $p=4/3$, $q=2$,
  $\rho=1/\sqrt{3}$ (since $(p-1)/(q-1)=1/3=\rho^2$).  This gives
  $\|T_{1/\sqrt{3}}\varphi_i\|_2\leqslant\|\varphi_i\|_{4/3}$.  
   Since
  \[
    \|T_{1/\sqrt{3}}\varphi_i\|_2^2
    = \sum_{S'\subseteq[d]\setminus\{i\}}3^{-|S'|}\widehat{\varphi_i}(S')^2
    \geqslant 3^{-(d_0-1)}\sum_{|S'|\leqslant d_0-1}\widehat{\varphi_i}(S')^2
    = 3^{-(d_0-1)}W_i,
  \]
 we obtain
  \begin{equation}\label{eq:Wi-hyper}
    W_i \;\leqslant\; 3^{d_0-1}\,\|\varphi_i\|_{4/3}^2.
  \end{equation}

  We now bound $\|\varphi_i\|_{4/3}^2$. Since $\Delta_i f\in[0,1]$ (by monotonicity and $f\in[0,1]$),
  we have $(\Delta_i f)^p\leqslant\Delta_i f$ pointwise for any
  $p\geqslant 1$, and therefore
  \[
\|\varphi_i\|_{4/3}^{4/3}
= \E\bigl[|\varphi_i(X_{-i})|^{4/3}\bigr]
= 2^{-4/3}\,\E\bigl[(\Delta_i f(X))^{4/3}\bigr]
\leqslant 2^{-4/3}\,I_i(f).
  \]
  Raising to the power $3/2$, we obtain
  \begin{equation*}
    \|\varphi_i\|_{4/3}^2
    \leqslant \bigl(2^{-4/3}\,I_i(f)\bigr)^{3/2}
    = \frac{I_i(f)^{3/2}}{4}.
  \end{equation*}
  Substituting into~\eqref{eq:Wi-hyper}, we are led to
  \begin{equation*}
    W_i \;\leqslant\; \frac{3^{d_0-1}}{4}\,I_i(f)^{3/2}.
  \end{equation*}

  For $i\notin J$: $I_i(f)<\delta$ by definition of~$J$.  Hence
  $I_i(f)^{3/2}=I_i(f)\cdot I_i(f)^{1/2}<I_i(f)\cdot\delta^{1/2}$.
  Summing over $i\notin J$, we conclude 
  \begin{equation}\label{eq:W-bound}
    W
    \;\leqslant\;\frac{3^{d_0-1}\,\delta^{1/2}}{4}
    \sum_{i\notin J}I_i(f)
    \;\leqslant\;\frac{3^{d_0-1}\,\delta^{1/2}}{4}\,K
    \;=\;\frac{K\cdot 3^{d_0}\cdot\delta^{1/2}}{12}.
  \end{equation}
  Combining~\eqref{eq:high-deg} and~\eqref{eq:W-bound} gives
  Part~(ii).
\end{proof}

\medskip\noindent\textbf{Comparison with the classical statement.}\;
  
The junta theorem of \citet{friedgut1998}, stated for Boolean
functions $f:\Hd\to\{0,1\}$ with bounded total $L^2$-influence
$I^{(2)}(f)\leqslant K$, asserts that $f$ is $\varepsilon$-close
in $L^2$ to a function depending on at most $2^{O(K/\varepsilon)}$
coordinates.  Proposition~\ref{prop:spectral} extends this to
monotone real-valued functions under an $L^1$-influence budget,
via the bridge $I^{(2)}(f)\leqslant I(f)\leqslant K$.

Specifically, fix $\varepsilon\in(0,1)$ and set
$d_0:=\lceil K/(2\varepsilon)\rceil$ and
$\delta:=e^{-\gamma d_0}$ with $\gamma>2\log 3$.  Define
$J:=\{i:I_i(f)\geqslant\delta\}$ and
$g:=\sum_{S\in\cS}\hat f(S)\,\chi_S$, where
$\cS:=\{S\subseteq J:|S|\leqslant d_0\}$.  By
Proposition~\ref{prop:spectral}(ii) and $I^{(2)}(f)\leqslant K$,
\[
  \|f-g\|_2^2
  \;\leqslant\;\frac{K}{4d_0}+\frac{K}{12}\,(3e^{-\gamma/2})^{d_0}
  \;\leqslant\;\frac{\varepsilon}{2}+\frac{\varepsilon}{2}
  \;=\;\varepsilon,
\]
provided $K/\varepsilon$ is larger than a constant depending
only on~$\gamma$ (so that the second term is also at
most~$\varepsilon/2$).  Since $g$ depends only on the
coordinates in~$J$, and Proposition~\ref{prop:spectral}(i)
gives $|J|\leqslant K/\delta=Ke^{\gamma d_0}$, the function~$g$
is a junta with at most
\[
  |J|\leqslant Ke^{\gamma\lceil K/(2\varepsilon)\rceil}
  \;=\;2^{O(K/\varepsilon)}
\]
coordinates, where the last equality uses
$\gamma d_0\leqslant\gamma(K/(2\varepsilon)+1)=O(K/\varepsilon)$
and $\log K\leqslant K/(2\varepsilon)$ for $K\geqslant 1$ and
$\varepsilon\leqslant 1/2$.  This recovers the classical junta
theorem with explicit, dimension-free bounds.

Our result provides two additional pieces of information
exploited in Section~\ref{sec:upper}.  First, the approximation
is through the Fourier projection onto~$\cS$, whose cardinality
is much smaller than that of the full junta.  Indeed, by
Proposition~\ref{prop:spectral}(i), $|J|\leqslant Ke^{\gamma d_0}$,
and the standard binomial estimate
$|\cS|\leqslant\sum_{j=0}^{d_0}\binom{|J|}{j}
\leqslant(e|J|/d_0)^{d_0}$ gives
\[
  \log|\cS|
  \;\leqslant\; d_0\log\frac{e|J|}{d_0}
  \;\leqslant\; d_0\bigl(\gamma d_0+\log(eK/d_0)\bigr)
  \;=\; \gamma d_0^2 + d_0\log(eK/d_0),
\]
which grows at most quadratically in~$d_0$, whereas the full
junta $\{0,1\}^J$ has $2^{|J|}=2^{2^{O(K/\varepsilon)}}$
elements---a double exponential in $K/\varepsilon$.  This
gap is the key reason the Fourier estimator achieves a better
rate than a junta-based estimator; the precise variance
calculation exploiting this bound is carried out in
Section~\ref{sec:upper}. Second, the bound in
Proposition~\ref{prop:spectral}(ii) involves the budget~$K$
rather than the ambient dimension~$d$, which is essential for
dimension-free estimation: in the original argument of
\citet{friedgut1998}, the analogous bound
carries a prefactor~$d$ instead of~$K$.

Finally, we note that the exponential dependence
$|J|=2^{O(K/\varepsilon)}$ cannot in general be improved.
The Tribes function (Example~\ref{ex:examples}(c)) satisfies
$I(\mathrm{Tribes})=\Theta(\log d)$, so $K=\Theta(\log d)$
and $d=2^{\Theta(K)}$.  Yet any $g$ with
$\|\mathrm{Tribes}-g\|_2^2\leqslant 1/10$ must depend on at
least $\Omega(d/\log d)$ coordinates
\citep[Section~4.2]{odonnell2014}, which is $2^{\Theta(K)}/\Theta(K)$---super-polynomial in~$K$, confirming that the exponential
dependence on~$K$ in the junta size is unavoidable.

\section{Proof of the upper bound}\label{sec:upper}

This section constructs the estimator and proves
Theorem~\ref{thm:main}(i) and Theorem~\ref{thm:adaptive}(i). 

The procedure combines influence
estimation with Fourier coefficient estimation on the spectral
concentration set identified in Proposition~\ref{prop:spectral}.
A sample-splitting step ensures independence between variable
selection and coefficient estimation.

\subsection{The estimator}\label{ssec:estimator}

The estimator proceeds as follows.

\medskip
\noindent\rule{\linewidth}{0.4pt}
\begin{procedure}[Fourier thresholding estimator]\label{proc:fourier}
  \leavevmode\par
\vspace{0.3em}
\noindent\emph{Input}: data $\cD_n=\{(X_j,Y_j)\}_{j=1}^n$;
 universal constants $\gamma>2\log 3$ and $c_0>0$.

\smallskip
\noindent\emph{Step~0 (Sample splitting).}\;
  Split $\cD_n$ into $\cD_1=\{(X_j,Y_j)\}_{j=1}^{n_1}$ and
  $\cD_2=\{(X_j,Y_j)\}_{j=n_1+1}^{n}$, where
  $n_1=\lfloor n/2\rfloor$ and $n_2=n-n_1$.

\smallskip
\noindent\emph{Step~1 (Variable selection).}\;
  Set
  \begin{equation*}
    d_0 := \max\!\left(\left\lceil\sqrt{\frac{(\log n
- c_0\sqrt{\log n})_+}{\gamma}}\;\right\rceil,\, 1\right)
    \qquad\text{and}\qquad
    \delta:=e^{-\gamma d_0}.
  \end{equation*}
  Using $\cD_1$, compute for each $i\in[d]$:
  \begin{equation*}
    \hat I_i := \bar Y_{i,1}-\bar Y_{i,0},
    \qquad
    \bar Y_{i,b}
    := \frac{\sum_{j\leqslant n_1}Y_j\,\one_{\{X_{j,i}=b\}}}
    {\sum_{j\leqslant n_1}\one_{\{X_{j,i}=b\}}}.
  \end{equation*}
  Select coordinates
  $\hat J := \{i\in[d]:\hat I_i\geqslant\delta/2\}$.

\smallskip
\noindent\emph{Step~2 (Fourier estimation).}\;
  Using $\cD_2$, compute the empirical Fourier coefficients
  \[
    \tilde f(S)
    := \frac{1}{n_2}\sum_{j=n_1+1}^{n}Y_j\,\chi_S(X_j)
  \]
  for every $S$ in the estimated spectral set
  $\hat\cS:=\{S\subseteq\hat J:|S|\leqslant d_0\}$.

\smallskip
\noindent\emph{Step~3 (Reconstruction and truncation).}\; Output
  \begin{equation*}
    \hat f_n(x) := \max\!\Big(0,\;\min\!\Big(
    \sum_{S\in\hat\cS}\tilde f(S)\,\chi_S(x),\;1\Big)\Big).
  \end{equation*}
\end{procedure}
\nopagebreak
\noindent\rule{\linewidth}{0.4pt}
\medskip

We emphasize that $d_0$ depends only on~$n$, $\gamma$, and~$c_0$
(universal constants), not on~$K$ or~$d$.  The estimator $\hat f_n$
therefore does not require knowledge of~$K$ and achieves the bound
of Theorem~\ref{thm:main} simultaneously for all $K\leqslant d$
and all $f\in\cF_K$.

\begin{proposition}\label{prop:inf-conc}
  There exists $C_\sigma>0$ (depending only on $\sigma$) such that
  for every $t>0$, 
  \begin{equation*}
    \mathbb P\!\left(\max_{i\in[d]}|\hat I_i-I_i(f)|>t\right)
    \;\leqslant\; 2d\exp\!\left(-\frac{n_1\,t^2}{C_\sigma}\right)
    + 2d\,e^{-n_1/8}.
  \end{equation*}
\end{proposition}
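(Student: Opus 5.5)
The plan is to fix a coordinate $i$, prove the bound for a single $i$ with $d$ replaced by $1$, and finish with a union bound over $i\in[d]$. Write $N_b:=\sum_{j\leqslant n_1}\one_{\{X_{j,i}=b\}}$, so that $N_1\sim\mathrm{Bin}(n_1,1/2)$ and $N_0=n_1-N_1$. Consider the good event
\[
  \cE_i:=\{N_0\geqslant n_1/4,\;N_1\geqslant n_1/4\}.
\]
By Hoeffding's inequality, $\mathbb P(N_1<n_1/4)\leqslant e^{-2n_1(1/4)^2}=e^{-n_1/8}$, and the same bound holds for $N_0$. Hence $\mathbb P(\cE_i^c)\leqslant 2e^{-n_1/8}$. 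Summed over $i$, this produces the second term $2d\,e^{-n_1/8}$.

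\textbf{Conditioning on the split.} Work conditionally on the vector $(X_{j,i})_{j\leqslant n_1}$ of $i$-th coordinates, on $\cE_i$. Under the uniform measure the remaining coordinates $X_{j,-i}$ are i.i.d.\ uniform and independent of $X_{j,i}$, and the noise is independent of everything. Therefore, conditionally, $\bar Y_{i,b}$ is an average of $N_b$ independent variables $Y_j=f(X_j)+\varepsilon_j$. Each has conditional mean $\E[f(X)\mid X_i=b]$, with $f(X_j)\in[0,1]$ bounded and $\varepsilon_j$ sub-Gaussian with parameter $\sigma^2$. So each $Y_j-\E[Y_j\mid X_{j,i}]$ is sub-Gaussian with variance proxy $1/4+\sigma^2$, using Hoeffding's lemma for the bounded part and independence of the two parts. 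By~\eqref{eq:inf-means}, $\hat I_i-I_i(f)=(\bar Y_{i,1}-\E[f\mid X_i=1])-(\bar Y_{i,0}-\E[f\mid X_i=0])$. This is a weighted sum of independent centered sub-Gaussian terms with weights $\pm 1/N_b$, whose variance proxy is
\[
  (1/4+\sigma^2)\bigl(1/N_1+1/N_0\bigr)\leqslant 8(1/4+\sigma^2)/n_1
\]
on $\cE_i$. The sub-Gaussian tail bound then gives
\[
  \mathbb P\bigl(|\hat I_i-I_i(f)|>t\,\big|\,(X_{j,i})_j\bigr)\leqslant 2\exp\!\bigl(-n_1t^2/(16(1/4+\sigma^2))\bigr)
\]
on $\cE_i$. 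Taking $C_\sigma:=16(1/4+\sigma^2)$ and integrating out the conditioning yields $\mathbb P(|\hat I_i-I_i|>t,\ \cE_i)\leqslant 2e^{-n_1t^2/C_\sigma}$.

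\textbf{Union bound and the main subtlety.} Since $\{|\hat I_i-I_i|>t\}\subseteq(\{|\hat I_i-I_i|>t\}\cap\cE_i)\cup\cE_i^c$, a union bound over $i\in[d]$ gives exactly the claimed inequality. On $\cE_i$ the ratios are well defined; off it, possibly empty denominators are irrelevant because that event is charged fully to the second term. The main point requiring care is the conditioning step: one must verify that, given $X_{j,i}$, the summands remain independent with the correct conditional mean $\E[f(X)\mid X_i=b]$. This holds because the $(X_j,\varepsilon_j)$ are i.i.d.\ across $j$ and the coordinates of $X_j$ are independent under $\mu$. Everything else is routine sub-Gaussian bookkeeping.
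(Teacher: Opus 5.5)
Your proposal is correct and follows essentially the same route as the paper's proof: Hoeffding control of the split counts via the event $\cE_i$, conditional sub-Gaussian bounds on the two group means with per-observation variance proxy $\sigma^2+1/4$ (Hoeffding's lemma plus independence of the noise), and a union bound over $i\in[d]$. The differences are minor. You condition on the full vector of $i$-th coordinates rather than on the counts, you bound $\mathbb P(\{|\hat I_i-I_i|>t\}\cap\cE_i)$ rather than a probability conditional on $\cE_i$ (which also avoids the degenerate case $\mathbb P(\cE_i)=0$ when $n_1=1$), and you obtain the slightly sharper constant $C_\sigma=16(\sigma^2+1/4)$ instead of the paper's $16(\sigma^2+1)$.
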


\begin{proof}
  Fix $i\in[d]$ and let
  $N_{i,b}:=\sum_{j\leqslant n_1}\one_{\{X_{j,i}=b\}}$.
  Since $N_{i,1}\sim\mathrm{Bin}(n_1,1/2)$, Hoeffding's inequality
  \citep[Theorem~2.2.6]{vershynin2018} gives
  \[
    \mathbb P(N_{i,1}\leqslant n_1/4)
    \;\leqslant\; e^{-n_1/8},
  \]
  and the same bound holds for $N_{i,0}=n_1-N_{i,1}$.  Define the
  event $\cE_i:=\{N_{i,0}\geqslant n_1/4,\;N_{i,1}\geqslant n_1/4\}$,
  which satisfies $\mathbb P(\cE_i)\geqslant 1-2e^{-n_1/8}$.  On
  $\cE_i$, all sample means $\bar Y_{i,b}$ are well defined since
  $N_{i,b}\geqslant 1$.

  Conditionally on $N_{i,b}=n_b$, the observations
  $(X_j,Y_j)_{j:X_{j,i}=b}$ are $n_b$ i.i.d.\ copies of
  $(X,Y)$ drawn from the law of $(X,f(X)+\varepsilon)$ conditioned
  on $X_i=b$.  Each centered variable
  $Y_j-\E[f(X)\mid X_i=b]=(f(X_j)-\E[f(X)\mid X_i=b])+\varepsilon_j$
  is a sum of two independent terms: $\varepsilon_j$, which is
  sub-Gaussian with parameter~$\sigma^2$ by assumption, and
  $f(X_j)-\E[f(X)\mid X_i=b]$, which takes values in an interval
  of length at most~$1$ (since $f\in[0,1]$) and is therefore
  sub-Gaussian with parameter~$1/4$ by Hoeffding's lemma.  Hence
  $Y_j-\E[f(X)\mid X_i=b]$ is sub-Gaussian with
  parameter~$\sigma^2+1/4$, and $\bar Y_{i,b}-\E[f(X)\mid X_i=b]$
  is sub-Gaussian with parameter
  $(\sigma^2+1/4)/n_b\leqslant 4(\sigma^2+1/4)/n_1$ on~$\cE_i$.

Since $\hat I_i-I_i=(\bar Y_{i,1}-\E[f(X)\mid X_i{=}1])-
(\bar Y_{i,0}-\E[f(X)\mid X_i{=}0])$ is a difference of two
independent sub-Gaussian variables, it is sub-Gaussian with
parameter at most $8(\sigma^2+1/4)/n_1$ on $\cE_i$.
The standard tail bound then gives
\[
  \mathbb P\bigl(|\hat I_i-I_i|>t\;\big|\;\cE_i\bigr)
  \;\leqslant\; 2\exp\!\left(-\frac{n_1\,t^2}{C_\sigma}\right)
\]
with $C_\sigma:=16(\sigma^2+1)$, using $\sigma^2+1/4\leqslant\sigma^2+1$.  Since
  $\mathbb P(|\hat I_i-I_i|>t)\leqslant
  \mathbb P(|\hat I_i-I_i|>t\mid\cE_i)+\mathbb P(\cE_i^c)$,
  a union bound over $i\in[d]$ gives
  \begin{equation*}
    \mathbb P\!\left(\max_{i\in[d]}|\hat I_i-I_i(f)|>t\right)
    \;\leqslant\; 2d\exp\!\left(-\frac{n_1\,t^2}{C_\sigma}\right)
    + 2d\,e^{-n_1/8}.
  \end{equation*}
\end{proof}

Define the \emph{good event}
\begin{equation*}
  \Omega_n := \left\{\max_{i\in[d]}|\hat I_i-I_i(f)|
  \leqslant\frac{\delta}{4}\right\}.
\end{equation*}
Setting $t=\delta/4$ in Proposition~\ref{prop:inf-conc}, we have
\begin{equation*}
  \mathbb P(\Omega_n^c)
  \;\leqslant\; 2d\exp\!\left(-\frac{n_1\,\delta^2}
  {16\,C_\sigma}\right) + 2d\,e^{-n_1/8}.
\end{equation*}

\begin{proposition}[Variable selection]\label{prop:selection}
  On the event $\Omega_n$, the following properties hold.
  \begin{enumerate}[label=(\roman*),nosep]
  \item The set $J=\{i\in[d]:I_i(f)\geqslant\delta\}$
    from Proposition~\ref{prop:spectral} satisfies
    $J\subseteq\hat J$.
  \item Every $i\in\hat J$ satisfies
    $I_i(f)\geqslant\delta/4$.
  \item $|\hat J|\leqslant 4K/\delta$.
  \end{enumerate}
\end{proposition}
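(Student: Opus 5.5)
The plan is to work throughout on $\Omega_n$, where every estimated influence is within $\delta/4$ of the truth: $|\hat I_i-I_i(f)|\leqslant\delta/4$ for all $i\in[d]$. Each of the three properties then follows by comparing the selection threshold $\delta/2$ with this deviation. No further probabilistic argument is needed; the randomness is entirely absorbed into the definition of $\Omega_n$.

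For (i), take $i\in J$, so $I_i(f)\geqslant\delta$. Then
\[
\hat I_i\geqslant I_i(f)-\delta/4\geqslant 3\delta/4\geqslant\delta/2,
\]
hence $i\in\hat J$.

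For (ii), take $i\in\hat J$, so $\hat I_i\geqslant\delta/2$. Then
\[
I_i(f)\geqslant\hat I_i-\delta/4\geqslant\delta/4.
\]

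For (iii), combine (ii) with the budget constraint, in the same way as Proposition~\ref{prop:spectral}(i). Every $i\in\hat J$ contributes at least $\delta/4$, and the influences $I_i(f)$ are nonnegative by monotonicity. This gives
\[
|\hat J|\,\delta/4\leqslant\sum_{i\in\hat J}I_i(f)\leqslant I(f)\leqslant K,
\]
so $|\hat J|\leqslant 4K/\delta$.

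There is no real obstacle here. The only points to check are that nonnegativity of $I_i(f)$ lets the partial sum over $\hat J$ be bounded by $I(f)$, and that the argument is deterministic given $\Omega_n$.
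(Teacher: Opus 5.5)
Your proposal is correct and follows the paper's proof essentially verbatim: (i) and (ii) come from comparing the threshold $\delta/2$ with the $\delta/4$ deviation on $\Omega_n$, and (iii) comes from summing (ii) over $\hat J$ against the budget $I(f)\leqslant K$. You also state explicitly that $I_i(f)\geqslant 0$ is needed to bound the partial sum over $\hat J$ by $I(f)$, which the paper uses only implicitly.
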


\begin{proof}
  On $\Omega_n$, we have
  $|\hat I_i-I_i(f)|\leqslant\delta/4$ for all $i\in[d]$.

  For~(i), let $i\in J$.  Then $I_i(f)\geqslant\delta$, so
  $\hat I_i\geqslant I_i(f)-\delta/4\geqslant 3\delta/4>\delta/2$,
  hence $i\in\hat J$.

  For~(ii), let $i\in\hat J$.  Then $\hat I_i\geqslant\delta/2$,
  hence $I_i(f)\geqslant\hat I_i-\delta/4\geqslant\delta/4$.

  For~(iii), summing the bound from~(ii) over $i\in\hat J$ gives
  $\sum_{i\in\hat J}I_i(f)\geqslant|\hat J|\cdot\delta/4$.
  Since $\sum_{i=1}^d I_i(f)=I(f)\leqslant K$, we conclude that
  $|\hat J|\leqslant 4K/\delta$.
\end{proof}

\subsection{Risk analysis}\label{ssec:risk}

The following result is the technical core of the upper bound.
It implies Theorem~\ref{thm:main}(i) via Part~(i) and
Theorem~\ref{thm:adaptive}(i) via Part~(ii).

\begin{theorem}[Upper bound, detailed version]\label{thm:upper}
  For every $\varepsilon\in(0,1)$, there exist constants $c,C,C'>0$
  (depending only on $\sigma$ and~$\varepsilon$) such that the
  following holds for all integers $n\geqslant 2$, $d\geqslant 1$
  with $\log d\leqslant n^{1-\varepsilon}$, and every $K\leqslant d$.
  \begin{enumerate}[label=(\roman*)]
  \item \textbf{Uniform bound.}\;
    \begin{equation*}
      R(\hat f_n,f)
      \;\leqslant\; C\,\frac{K}{\sqrt{\log n}}
      \;+\; C'\,e^{-c\sqrt{\log n}}.
    \end{equation*}
 \item \textbf{Refined bound.}\;
    Under the additional assumption $K\leqslant\sqrt{\log n}$,
    \begin{equation*}
      R(\hat f_n,f)
      \;\leqslant\; C\,\frac{I^{(2)}(f)}{\sqrt{\log n}}
      \;+\; C'\,e^{-c\sqrt{\log n}}.
    \end{equation*}
  \end{enumerate}
\end{theorem}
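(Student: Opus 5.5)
Let $g_{\hat\cS}:=\sum_{S\in\hat\cS}\hat f(S)\chi_S$ and $\tilde g:=\sum_{S\in\hat\cS}\tilde f(S)\chi_S$. Since $f\in[0,1]$, truncation to $[0,1]$ can only reduce the pointwise distance to $f$, so $\|\hat f_n-f\|_2\leqslant\|\tilde g-f\|_2$. On $\Omega_n^c$ we also have the trivial bound $\|\hat f_n-f\|_2^2\leqslant 1$. Hence
\[
R(\hat f_n,f)\leqslant \E\bigl[\one_{\Omega_n}\|\tilde g-f\|_2^2\bigr]+\mathbb P(\Omega_n^c).
\]
Because $\hat\cS$ is $\cD_1$-measurable and $\cD_2$ is independent of $\cD_1$, Parseval gives, conditionally on $\cD_1$, the bias--variance split
\[
\E\bigl[\|\tilde g-f\|_2^2\mid\cD_1\bigr]=\sum_{S\notin\hat\cS}\hat f(S)^2+\sum_{S\in\hat\cS}\Var(\tilde f(S)\mid\cD_1),
\]
where $\tilde f(S)$ is unbiased for $\hat f(S)$. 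Each variance is at most $\E[Y^2]/n_2\leqslant(1+\sigma^2)/n_2$, since $|\chi_S|=1$.

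\textbf{Bias.} On $\Omega_n$, Proposition~\ref{prop:selection}(i) gives $J\subseteq\hat J$, so $\cS\subseteq\hat\cS$. Proposition~\ref{prop:spectral}(ii) then bounds the bias by
\[
\frac{I^{(2)}(f)}{4d_0}+\frac{K\,3^{d_0}e^{-\gamma d_0/2}}{12}.
\]
With $d_0\asymp\sqrt{(\log n)/\gamma}$, the first term is at most $C I^{(2)}(f)/\sqrt{\log n}$, which is $\leqslant CK/\sqrt{\log n}$ by \eqref{eq:L1-L2}. Since $\gamma>2\log 3$, the second term is $K e^{-c d_0}=Ke^{-c'\sqrt{\log n}}$.
\begin{itemize}[nosep]
\item For part (i), bound $K e^{-c'\sqrt{\log n}}\leqslant CK/\sqrt{\log n}$.
\item For part (ii), use $K\leqslant\sqrt{\log n}$ so that $Ke^{-c'\sqrt{\log n}}\leqslant\sqrt{\log n}\,e^{-c'\sqrt{\log n}}\leqslant C'e^{-c\sqrt{\log n}}$ after shrinking $c$.
\end{itemize}

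\textbf{Variance.} On $\Omega_n$, Proposition~\ref{prop:selection}(iii) gives $|\hat J|\leqslant 4K/\delta=4Ke^{\gamma d_0}$, hence
\[
|\hat\cS|\leqslant (e|\hat J|/d_0)^{d_0}\leqslant \exp\bigl(\gamma d_0^2+d_0\log(4eK)\bigr).
\]
This is the main obstacle: the variance $|\hat\cS|(1+\sigma^2)/n_2$ must be $O(e^{-c\sqrt{\log n}})$ uniformly in $K\leqslant d$, and the $c_0$ in $d_0$ is what makes this work. By definition,
\[
\gamma d_0^2\leqslant(\sqrt{\log n-c_0\sqrt{\log n}}+\sqrt\gamma)^2\leqslant\log n-c_0\sqrt{\log n}+2\sqrt{\gamma\log n}+\gamma,
\]
so $e^{\gamma d_0^2}/n\leqslant e^{-(c_0-2\sqrt\gamma)\sqrt{\log n}+\gamma}$. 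The remaining factor $e^{d_0\log(4eK)}=(4eK)^{O(\sqrt{\log n})}$ is harmless only when $\log K$ is bounded. I would split into two cases.
\begin{itemize}[nosep]
\item If $K\geqslant\sqrt{\log n}$ (or $K$ larger than a constant times it), the trivial bound $R\leqslant1\leqslant CK/\sqrt{\log n}$ suffices for part (i).
\item If $K\leqslant\sqrt{\log n}$, then $\log K\leqslant\tfrac12\log\log n$, so $d_0\log(4eK)=O(\sqrt{\log n}\,\log\log n)$. This term is $o(\sqrt{\log n})$ only up to a $\log\log n$ factor, so I would need $c_0$ to absorb it. The fix I would try first is to tighten $|\hat J|$ via Proposition~\ref{prop:selection}(ii): use $\binom{|\hat J|}{j}\leqslant |\hat J|^j/j!$ and let the $d_0!$ denominator cancel part of $K^{d_0}$. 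Failing that, I would accept a slightly smaller $c$, or take $c_0$ large and treat $\log\log n$ through the slack in $\gamma$; this is the step I would check most carefully.
\end{itemize}

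\textbf{Bad event.} By Proposition~\ref{prop:inf-conc},
\[
\mathbb P(\Omega_n^c)\leqslant 2d\exp(-n_1\delta^2/(16C_\sigma))+2de^{-n_1/8}.
\]
Here $\delta^2=e^{-2\gamma d_0}=e^{-O(\sqrt{\log n})}=n^{-o(1)}$, so $n_1\delta^2\geqslant n^{1-\varepsilon/2}$ for $n$ large. Together with $\log d\leqslant n^{1-\varepsilon}$, both terms are at most $\exp(-n^{1-\varepsilon/2}/C+n^{1-\varepsilon})$, which is far below $e^{-c\sqrt{\log n}}$. Small $n$ is absorbed into $C'$. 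Summing the three contributions yields (i) and (ii).
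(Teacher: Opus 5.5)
Your proposal follows the paper's proof essentially step for step. It reduces to $K\leqslant\sqrt{\log n}$ via the trivial bound, splits on $\Omega_n$, uses Proposition~\ref{prop:spectral} for the bias, counts $|\hat\cS|$ with $c_0$ absorbing the excess over $n$, and uses Proposition~\ref{prop:inf-conc} for $\mathbb P(\Omega_n^c)$. The bias and bad-event parts are fine as written.

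The one step you leave open, the variance count, is a self-inflicted gap. In passing from $(e|\hat J|/d_0)^{d_0}$ to $\exp(\gamma d_0^2+d_0\log(4eK))$ you threw away the factor $d_0^{-d_0}$. Keeping it gives $\log|\hat\cS|\leqslant\gamma d_0^2+d_0\log(4eK/d_0)$. In the case $K\leqslant\sqrt{\log n}$ you have $d_0\geqslant\sqrt{(\log n)/(2\gamma)}$ for large $n$, so $4eK/d_0\leqslant 4e\sqrt{2\gamma}$. The second term is then at most $C_6\sqrt{\log n}$, with $C_6$ depending only on $\gamma$. Taking $c_0=2\sqrt{\gamma}+C_6+2$ yields $|\hat\cS|/n\leqslant e^{-2\sqrt{\log n}+\gamma}$. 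This is precisely your ``first fix'' ($K^{d_0}/d_0!\leqslant(eK/d_0)^{d_0}$), and it is what the paper does in \eqref{eq:log-Shat}--\eqref{eq:Shat-over-n}.

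Your fallback options, by contrast, would not work. Without the $d_0^{-d_0}$ factor the exponent carries a term of order $\sqrt{\log n}\,\log\log n$. No constant $c_0$ can absorb it, since $c_0$ only buys $c_0\sqrt{\log n}$. Shrinking $c$ does not help either, nor does the choice of $\gamma$, because $\gamma d_0^2\approx\log n-c_0\sqrt{\log n}$ whatever $\gamma$ is. The resulting upper bound on the variance would therefore tend to infinity.

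Finally, the estimate $\sum_{j\leqslant k}\binom{N}{j}\leqslant(eN/k)^k$ requires $k\leqslant N$. When $|\hat J|<d_0$, use $|\hat\cS|=2^{|\hat J|}\leqslant 2^{d_0}$ instead, as the paper does.
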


The two bounds are consistent: Part~(i) follows from
Part~(ii) via $I^{(2)}(f)\leqslant I(f)\leqslant K$
(see~\eqref{eq:L1-L2}).  Part~(ii) is strictly stronger
in the \emph{fluid} regime $I^{(2)}(f)\ll K$, and will
be used to derive Theorem~\ref{thm:adaptive}(i) in
Section~\ref{ssec:proof-upper}.

\begin{proof}
Throughout, $C_1,C_2,\ldots$ denote positive constants depending
only on $\sigma$, $\gamma$, $\varepsilon$, and~$c_0$; their
values may change from line to line but never depend on $n$,
$d$, or~$K$.  All conditions of the form ``for $n$ large
enough'' involve thresholds depending only on these parameters
and can be absorbed by enlarging the constants $C$ and~$C'$
in the statement; in particular, we assume henceforth that
$n$ is large enough that $\log n\geqslant c_0\sqrt{\log n}$,
so that $d_0=\bigl\lceil\sqrt{(\log n-c_0\sqrt{\log n})/\gamma}
\bigr\rceil\geqslant 1$.  The constant~$c_0$ depends only
on~$\gamma$ and is determined at the end of Step~1.

For $K>\sqrt{\log n}$, Part~(i) holds since
$CK/\sqrt{\log n}\geqslant C\geqslant 1\geqslant R(\hat f_n,f)$
for any $C\geqslant 1$.  It therefore suffices to prove
Part~(ii) under the assumption
\begin{equation}\label{eq:K-small}
  K\;\leqslant\;\sqrt{\log n},
\end{equation}
from which Part~(i) follows via $I^{(2)}(f)\leqslant K$.

Since $\hat f_n$ is truncated to $[0,1]$ and $f\in[0,1]$, the
pointwise error satisfies
$|\hat f_n(x)-f(x)|\leqslant 1$ for all~$x$, hence
\begin{equation}\label{eq:trivial-bound}
  \|\hat f_n-f\|_2^2\;\leqslant\; 1 \qquad\text{always.}
\end{equation}
We decompose the risk as
\begin{equation}\label{eq:risk-decomp}
  R(\hat f_n,f)
  \;=\; \E\bigl[\|\hat f_n-f\|_2^2\,\one_{\Omega_n}\bigr]
  + \E\bigl[\|\hat f_n-f\|_2^2\,\one_{\Omega_n^c}\bigr]
  \;\leqslant\;
  \E\bigl[\|\hat f_n-f\|_2^2\,\one_{\Omega_n}\bigr]
  + \mathbb P(\Omega_n^c),
\end{equation}
where the inequality uses~\eqref{eq:trivial-bound}.

\medskip\noindent\textbf{Step~1: Risk on the good event
(bias\,$+$\,variance).}\;
On~$\Omega_n$, the truncation can only reduce the $L^2$~error
(projecting onto $[0,1]$ is a contraction when the target lies
in~$[0,1]$), so it suffices to bound the risk of the
untruncated estimator
$\tilde f_n=\sum_{S\in\hat\cS}\tilde f(S)\chi_S$.  By
Parseval's identity, since $\tilde f_n$ sets to zero all
Fourier coefficients outside~$\hat\cS$,
\[
  \|\tilde f_n-f\|_2^2
  \;=\; \underbrace{\sum_{S\in\hat\cS}
  \bigl(\tilde f(S)-\hat f(S)\bigr)^2}_{\text{variance}}
  \;+\; \underbrace{\sum_{S\notin\hat\cS}
  \hat f(S)^2}_{\text{bias}}.
\]

\smallskip\noindent\emph{Bias.}\;
On $\Omega_n$, Proposition~\ref{prop:selection}(i) gives
$J\subseteq\hat J$, hence
$\cS=\{S\subseteq J:|S|\leqslant d_0\}\subseteq\hat\cS$.
By Proposition~\ref{prop:spectral}(ii) with
$\delta=e^{-\gamma d_0}$,
\begin{equation*}
  \sum_{S\notin\hat\cS}\hat f(S)^2
  \;\leqslant\;\sum_{S\notin\cS}\hat f(S)^2
  \;\leqslant\;\frac{I^{(2)}(f)}{4\,d_0}
  +\frac{K}{12}\,\eta^{d_0},
\end{equation*}
where $\eta:=3e^{-\gamma/2}<1$ (since $\gamma>2\log 3$).
Since $d_0\geqslant C_1\sqrt{\log n}$, the first term
satisfies 
\[
  \frac{I^{(2)}(f)}{4d_0}
  \;\leqslant\; \frac{C_2\,I^{(2)}(f)}{\sqrt{\log n}}.
\]
The second term decays exponentially: since
$K\leqslant\sqrt{\log n}$ by~\eqref{eq:K-small},
$(K/12)\eta^{d_0}\leqslant C_3\,e^{-C_4\sqrt{\log n}}$.
Therefore,
\begin{equation}\label{eq:bias-final}
  \text{Bias}
  \;\leqslant\;C_2\,\frac{I^{(2)}(f)}{\sqrt{\log n}}
  +C_3\,e^{-C_4\sqrt{\log n}}
  \;\leqslant\;C_2\,\frac{K}{\sqrt{\log n}}
  +C_3\,e^{-C_4\sqrt{\log n}},
\end{equation}
using $I^{(2)}(f)\leqslant K$ in the last step.

\smallskip\noindent\emph{Variance.}\;
By sample splitting, $\hat\cS$ is determined by~$\cD_1$ and
the coefficients $\tilde f(S)$ are computed from the
independent sample~$\cD_2$.  Conditionally on~$\cD_1$, the
errors $\tilde f(S)-\hat f(S)$ are centered, so by linearity
of expectation,
\[
  \E\!\left[\sum_{S\in\hat\cS}(\tilde f(S)-\hat f(S))^2
    \;\middle|\;\cD_1\right]
  = \sum_{S\in\hat\cS}\Var(\tilde f(S))
  \leqslant\frac{2(\sigma^2+1)}{n}\,|\hat\cS|,
\]
since $\Var(\tilde f(S))=\Var(Y\chi_S(X))/n_2
\leqslant\E[Y^2]/n_2\leqslant 2(\sigma^2+1)/n$ (using
$n_2\geqslant n/2$ and $\E[Y^2]\leqslant 1+\sigma^2$).

It remains to bound $|\hat\cS|$ on~$\Omega_n$.  By
Proposition~\ref{prop:selection}(iii),
$|\hat J|\leqslant 4Ke^{\gamma d_0}$.  We distinguish two
cases.  If $d_0\leqslant|\hat J|$, the standard binomial
inequality $\sum_{j=0}^{k}\binom{N}{j}\leqslant(eN/k)^{k}$
(valid for $k\leqslant N$) with $k=d_0$ and $N=|\hat J|$ gives
\[
  \log|\hat\cS|
  \;\leqslant\;d_0\bigl(\gamma d_0+\log(4eK/d_0)\bigr)
  \;=\;\gamma d_0^2+d_0\log(4eK/d_0).
\]
If $d_0>|\hat J|$, then $\hat\cS$ is the power set
of~$\hat J$ and $\log|\hat\cS|=|\hat J|\log 2
\leqslant d_0\log 2\leqslant\gamma d_0^2$
(since $d_0\geqslant 1$ and $\gamma>\log 2$).
In both cases,
\begin{equation}\label{eq:log-Shat}
  \log|\hat\cS|
  \;\leqslant\;\gamma d_0^2
  +d_0\bigl(\log(4eK/d_0)\bigr)^+.
\end{equation}

Write $u:=\sqrt{(\log n-c_0\sqrt{\log n})/\gamma}$, so that
$d_0=\lceil u\rceil\leqslant u+1$.  Then
\begin{equation}\label{eq:gd02}
  \gamma d_0^2
  \;\leqslant\;\gamma(u+1)^2
  \;=\;\log n-c_0\sqrt{\log n}+2\gamma u+\gamma
  \;\leqslant\;\log n-(c_0-2\sqrt{\gamma})\sqrt{\log n}+\gamma,
\end{equation}
where we used $u\leqslant\sqrt{(\log n)/\gamma}$.
By~\eqref{eq:K-small}, $K/d_0\leqslant\sqrt{\log n}/u
\leqslant\sqrt{2\gamma}$ for $n$ large enough that
$u\geqslant\sqrt{(\log n)/(2\gamma)}$, so
$4eK/d_0\leqslant 4e\sqrt{2\gamma}=:C_5$, and
\begin{equation}\label{eq:subleading}
  d_0\bigl(\log(4eK/d_0)\bigr)^+
  \;\leqslant\;(\log C_5)\,d_0
  \;\leqslant\;C_6\sqrt{\log n}.
\end{equation}
Substituting~\eqref{eq:gd02} and~\eqref{eq:subleading}
into~\eqref{eq:log-Shat},
\begin{equation}\label{eq:Shat-over-n}
  \log\frac{|\hat\cS|}{n}
  \;\leqslant\;-(c_0-2\sqrt{\gamma}-C_6)\sqrt{\log n}+\gamma.
\end{equation}
We now choose $c_0:=2\sqrt{\gamma}+C_6+2$, a constant
depending only on~$\gamma$.  Then the right-hand side
of~\eqref{eq:Shat-over-n} is at most
$-2\sqrt{\log n}+\gamma\leqslant-\sqrt{\log n}$ for $n$
large enough that $\sqrt{\log n}\geqslant\gamma$, and the
variance satisfies
\begin{equation}\label{eq:var-final}
  \text{Variance}
  \;\leqslant\;\frac{2(\sigma^2+1)}{n}\,|\hat\cS|
  \;\leqslant\;C_7\,e^{-\sqrt{\log n}}.
\end{equation}
Combining~\eqref{eq:bias-final} and~\eqref{eq:var-final},
\begin{equation}\label{eq:good-risk}
  \E\bigl[\|\hat f_n-f\|_2^2\,\one_{\Omega_n}\bigr]
  \;\leqslant\;C_2\,\frac{I^{(2)}(f)}{\sqrt{\log n}}
  +(C_3+C_7)\,e^{-c'\sqrt{\log n}},
\end{equation}
where $c':=\min(C_4,1)$.

\medskip\noindent\textbf{Step~2: Probability of the bad
event.}\;
By Proposition~\ref{prop:inf-conc} with $t=\delta/4$,
\begin{equation*}
  \mathbb P(\Omega_n^c)
  \;\leqslant\;\underbrace{2d\exp\!\left(
  -\frac{n_1\delta^2}{16C_\sigma}\right)}_{=:\,T_1}
  \;+\;\underbrace{2d\,e^{-n_1/8}}_{=:\,T_2},
\end{equation*}
where $C_\sigma=16(\sigma^2+1)$, and we use
$n_1=\lfloor n/2\rfloor\geqslant n/4$ throughout.

\smallskip\noindent\emph{Bound on~$T_2$.}\;
\[
  \log T_2
  \;=\;\log 2+\log d-\frac{n_1}{8}
  \;\leqslant\; 1+n^{1-\varepsilon}-\frac{n}{32}.
\]
Since $n^{1-\varepsilon}=o(n)$, the right-hand side is at most
$-n/64$ for $n$ large enough, giving
$T_2\leqslant e^{-n/64}\leqslant e^{-\sqrt{\log n}}$.

\smallskip\noindent\emph{Bound on~$T_1$.}\;
Since $d_0\leqslant\sqrt{(\log n)/\gamma}+1$,
\[
  \delta^2=e^{-2\gamma d_0}
  \;\geqslant\;e^{-2\gamma}\cdot e^{-2\sqrt{\gamma\log n}}.
\]
Setting $\alpha(n):=C_8\exp(\log n-2\sqrt{\gamma\log n})$
with $C_8:=e^{-2\gamma}/(64C_\sigma)$,
\[
  \frac{n_1\delta^2}{16C_\sigma}\geqslant\alpha(n),
  \qquad
  \log T_1\leqslant 1+n^{1-\varepsilon}-\alpha(n).
\]
Since $n^{1-\varepsilon}/\alpha(n)=C_8^{-1}
\exp(-\varepsilon\log n+2\sqrt{\gamma\log n})\to 0$,
we have $n^{1-\varepsilon}\leqslant\alpha(n)/2$ for $n$
large enough, so $\log T_1\leqslant 1-\alpha(n)/2$.
Finally, $\log n-2\sqrt{\gamma\log n}\geqslant(\log n)/2$
for $n$ large enough, so
$\alpha(n)\geqslant C_8\sqrt{n}\geqslant 2(1+\sqrt{\log n})$,
giving $T_1\leqslant e^{-\sqrt{\log n}}$.

\smallskip
Combining,
\begin{equation}\label{eq:bad-final}
  \mathbb P(\Omega_n^c)
  \;\leqslant\;C_9\,e^{-\sqrt{\log n}}.
\end{equation}

\medskip\noindent\textbf{Step~3: Conclusion.}\;
Substituting~\eqref{eq:good-risk} and~\eqref{eq:bad-final}
into~\eqref{eq:risk-decomp},
\begin{equation*}
  R(\hat f_n,f)
  \;\leqslant\;C_2\,\frac{I^{(2)}(f)}{\sqrt{\log n}}
  +(C_3+C_7+C_9)\,e^{-c'\sqrt{\log n}},
\end{equation*}
which is Part~(ii) with $C=C_2$ and $C'=C_3+C_7+C_9$.
Part~(i) follows by $I^{(2)}(f)\leqslant K$
for $K\leqslant\sqrt{\log n}$, and by
$CK/\sqrt{\log n}\geqslant 1\geqslant R(\hat f_n,f)$
for $K>\sqrt{\log n}$.
\end{proof}

\subsection{Proofs of Theorem~\ref{thm:main}(i) and
Theorem~\ref{thm:adaptive}(i)}\label{ssec:proof-upper}

Both results follow from Theorem~\ref{thm:upper} by the
same argument: the exponential term $C'e^{-c\sqrt{\log n}}$
is absorbed into the leading term after enlarging~$C$,
using $K/\sqrt{\log n}\gg e^{-c\sqrt{\log n}}$ for
$K\geqslant 1$ (Theorem~\ref{thm:main}(i), via Part~(i))
and $B/\sqrt{\log n}\gg e^{-c\sqrt{\log n}}$ for
$B\geqslant 1$ (Theorem~\ref{thm:adaptive}(i), via Part~(ii),
since $I^{(2)}(f)\leqslant B$ for every $f\in\cF_{K,B}$).\qed

\section{Proof of the lower bound}\label{sec:lower}

This section proves Theorem~\ref{thm:main}(ii) and
Theorem~\ref{thm:adaptive}(ii).  The argument
constructs a large family of well-separated monotone functions in
$\cF_K$ and applies Fano's inequality.  The construction exploits
the middle layer of the Boolean hypercube and the
Varshamov--Gilbert bound.

We use the following standard form of Fano's inequality; see
\citet[Theorem~2.7]{tsybakov2009}.

\begin{theorem}[Fano]\label{thm:fano}
  Let \(M\ge 2\), let $\{P_0,P_1,\ldots,P_M\}$ be probability measures on a
  measurable space, and let $\theta_0,\ldots,\theta_M$ be elements
  of a pseudometric space $(T,d)$ satisfying
  $d(\theta_i,\theta_j)\geqslant 2\delta>0$ for all $i\ne j$.
  Then
  \begin{equation}\label{eq:fano}
    \inf_{\hat\theta}\;\max_{0\leqslant j\leqslant M}\;
    \mathbb P_j\bigl(d(\hat\theta,\theta_j)\geqslant\delta\bigr)
    \;\geqslant\; 1-\frac{\bar\KL+\log 2}{\log M},
  \end{equation}
  where
  $\bar\KL:=\frac{1}{M}\sum_{j=1}^M\KL(P_j\|P_0)$.
\end{theorem}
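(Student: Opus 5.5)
We reduce estimation to multiple testing and then apply the classical information-theoretic Fano inequality with a uniform prior on the $M+1$ hypotheses.

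\emph{Step 1 (reduction to testing).} Fix an arbitrary estimator $\hat\theta$ and define the minimum-distance test $\psi:=\arg\min_{0\leqslant k\leqslant M}d(\hat\theta,\theta_k)$, with ties broken arbitrarily. Suppose $d(\hat\theta,\theta_j)<\delta$. Then for every $k\neq j$ the triangle inequality and the $2\delta$-separation give
\[
d(\hat\theta,\theta_k)\geqslant d(\theta_j,\theta_k)-d(\hat\theta,\theta_j)>2\delta-\delta=\delta>d(\hat\theta,\theta_j),
\]
so $\psi=j$. Hence $\{\psi\neq j\}\subseteq\{d(\hat\theta,\theta_j)\geqslant\delta\}$, and therefore
\[
\max_j\mathbb P_j\bigl(d(\hat\theta,\theta_j)\geqslant\delta\bigr)\geqslant\max_j\mathbb P_j(\psi\neq j)\geqslant\frac{1}{M+1}\sum_{j=0}^M\mathbb P_j(\psi\neq j).
\]
The right-hand side is the error probability $p_e:=\mathbb P(\psi\neq J)$ in the Bayesian model where $J$ is uniform on $\{0,\ldots,M\}$ and the observation $Z\mid J=j$ has law $P_j$. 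It therefore suffices to lower bound $p_e$ for an arbitrary test $\psi=\psi(Z)$.

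\emph{Step 2 (Fano's inequality for $p_e$).} By the data-processing inequality, $H(J\mid Z)\leqslant H(J\mid\psi)$. Introduce the error indicator $E:=\one\{\psi\neq J\}$ and apply the chain rule. Given $E=0$, $J$ is determined by $\psi$. Given $E=1$, $J$ ranges over at most $M$ values. This yields
\[
H(J\mid\psi)\leqslant H(E)+p_e\log M\leqslant\log 2+p_e\log M.
\]
Since $H(J)=\log(M+1)$, we get
\[
\log(M+1)-I(J;Z)\leqslant\log2+p_e\log M,
\]
and hence, using $\log(M+1)\geqslant\log M$,
\[
p_e\geqslant\frac{\log(M+1)-I(J;Z)-\log 2}{\log M}\geqslant 1-\frac{I(J;Z)+\log 2}{\log M}.
\]

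\emph{Step 3 (bounding the mutual information).} We write
\[
I(J;Z)=\frac{1}{M+1}\sum_{j=0}^M\KL(P_j\|\bar P),\qquad \bar P:=\frac{1}{M+1}\sum_{j=0}^M P_j .
\]
The mixture $\bar P$ minimizes $Q\mapsto\sum_j\KL(P_j\|Q)$, by the identity $\sum_j\KL(P_j\|Q)=\sum_j\KL(P_j\|\bar P)+(M+1)\KL(\bar P\|Q)$. Taking $Q=P_0$ therefore gives
\[
I(J;Z)\leqslant\frac{1}{M+1}\sum_{j=0}^M\KL(P_j\|P_0).
\]
The $j=0$ term vanishes, so this equals $\frac{M}{M+1}\bar\KL\leqslant\bar\KL$. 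If some $P_j\not\ll P_0$, then $\bar\KL=\infty$ and the bound~\eqref{eq:fano} is trivial, so we may assume absolute continuity throughout. Combining Steps 1--3 and taking the infimum over $\hat\theta$ yields~\eqref{eq:fano}.

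\emph{Main obstacle.} The only delicate point is the bookkeeping in Step 2. One must bound the conditional entropy given an error by $\log M$ rather than $\log(M+1)$, and then pass from $\log(M+1)$ to $\log M$ in the favorable direction. The variational characterization of mutual information in Step 3 needs the compensation identity above, which is a standard one-line computation. Everything else is routine, and in the paper one may simply cite \citet[Theorem~2.7]{tsybakov2009}.
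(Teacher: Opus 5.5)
Your proof is correct. The paper gives no proof of its own and simply cites \citet[Theorem~2.7]{tsybakov2009}; your three steps (reduction to a minimum-distance test, Fano's inequality for the uniform-prior error probability, and bounding the mixture divergence by the divergences to $P_0$) are the standard derivation of that result, and the stated inequality is exactly the weakening via $\frac{M}{M+1}\leqslant 1$ and $\log(M+1)\geqslant\log M$ that you perform at the end.
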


\subsection{The construction}\label{ssec:construction}

Throughout, all inequalities hold for $n$ large enough
  (depending only on $\sigma$); the constant~$c$
  in the statement of Theorem~\ref{thm:main}(ii) is adjusted
  accordingly.

Fix $K\geqslant 1$ and $d\geqslant s$, where
\begin{equation*}
  s := \lfloor 2\log_2 n\rfloor.
\end{equation*}
For $z\in\{0,1\}^s$, write $|z|:=\sum_{i=1}^s z_i$ for its
Hamming weight.  Let $m:=\lfloor s/2\rfloor$ and
$L_m:=\{z\in\{0,1\}^s:|z|=m\}$ the middle layer, with
$N:=|L_m|=\binom{s}{m}$.

\medskip\noindent\textbf{Step~1: Packing on the middle layer.}\;
By the Varshamov--Gilbert bound
\citep[Lemma~4.7]{massart2007}, there exists a subset
$\Omega\subseteq\{0,1\}^N$ with
\begin{equation}\label{eq:VG}
  \log|\Omega|\;\geqslant\;\frac{N}{8}
  \;=\;\frac{1}{8}\binom{s}{m},
  \qquad
  d_H(\omega,\omega')\;\geqslant\;\frac{N}{4}
  \quad\text{for all }\omega\ne\omega'\in\Omega,
\end{equation}
where $d_H$ denotes the Hamming distance on $\{0,1\}^N$.  We
index the coordinates of $\omega\in\{0,1\}^N$ by the elements
of~$L_m$, writing $\omega_a$ for $a\in L_m$.

\medskip\noindent\textbf{Step~2: From binary vectors to monotone functions.}\;
Fix a subset $S_0\subseteq[d]$ with $|S_0|=s$, and set
\begin{equation*}
  \beta := \frac{K}{A\sqrt{s}},
\end{equation*}
where $A>0$ is an absolute constant determined by
Stirling's approximation, chosen
at the end of the influence calculation below to ensure
$I(f_\omega)\leqslant K$.

For each $\omega\in\Omega$, define $f_\omega:\Hd\to[0,1]$ by
\begin{equation*}
  f_\omega(x) :=
  \begin{cases}
    0 & \text{if }|x_{S_0}|<m,\\[2pt]
    \beta\,\omega_{x_{S_0}} & \text{if }|x_{S_0}|=m,\\[2pt]
    \beta & \text{if }|x_{S_0}|>m,
  \end{cases}
\end{equation*}
where $x_{S_0}\in\{0,1\}^s$ denotes the projection of $x$ onto
the coordinates in~$S_0$;
see Figure~\ref{fig:construction} for an illustration.
\begin{figure}[ht]
\centering
\begin{tikzpicture}[
    layer/.style={minimum width=7cm, minimum height=0.45cm,
                  draw, rounded corners=2pt},
    braceleft/.style={decorate,
                      decoration={brace, amplitude=5pt}},
  ]

  \node at (0, 5.4) {\small
    Sub-hypercube $\{0,1\}^{S_0}$, organized by
    $|x_{S_0}|=\sum_{i\in S_0}x_i$};

  \node[layer, fill=white] (l0) at (0, 0) {};
  \node[layer, fill=white] (l2) at (0, 1.30) {};

  \node[layer, fill=gray!15, line width=1pt] (lm) at (0, 2.30) {};

  \foreach \x/\v in {%
    -2.8/1, -2.1/0, -1.4/1, -0.7/1, 0/0, 0.7/0, 1.4/1, 2.1/0, 2.8/0}
     {
    \ifnum\v=1
      \fill[black!70] (\x-0.28, 2.30-0.17)
                       rectangle (\x+0.28, 2.30+0.17);
    \else
      \fill[white] (\x-0.28, 2.30-0.17)
                    rectangle (\x+0.28, 2.30+0.17);
      \draw[gray]  (\x-0.28, 2.30-0.17)
                    rectangle (\x+0.28, 2.30+0.17);
    \fi
  }

  \node[layer, fill=black!25] (lm1) at (0, 3.30) {};
  \node[layer, fill=black!25] (lm3) at (0, 4.60) {};

  \node at (0, 0.75) {\small $\vdots$};
  \node at (0, 4.05) {\small $\vdots$};

  \node[left] at (-3.8, 0)    {\small $|x_{S_0}|=0$};
  \node[left] at (-3.8, 1.30) {\small $|x_{S_0}|=m{-}1$};
  \node[left] at (-3.8, 2.30) {\small $|x_{S_0}|=m$};
  \node[left] at (-3.8, 3.30) {\small $|x_{S_0}|=m{+}1$};
  \node[left] at (-3.8, 4.60) {\small $|x_{S_0}|=s$};

  \node[right] at (3.8, 0)    {\small $f_\omega=0$};
  \node[right] at (3.8, 1.30) {\small $f_\omega=0$};
  \node[right] at (3.8, 2.30) {\small $f_\omega=\beta\,\omega_{x_{S_0}}$};
  \node[right] at (3.8, 3.30) {\small $f_\omega=\beta$};
  \node[right] at (3.8, 4.60) {\small $f_\omega=\beta$};

  \draw[braceleft] (-3.65, -0.25) -- (-3.65, 1.55)
    node[midway, left=6pt] {\small $f_\omega\equiv 0$};

  \draw[braceleft] (-3.65, 3.05) -- (-3.65, 4.85)
    node[midway, left=6pt] {\small $f_\omega\equiv\beta$};

  \node[anchor=west] at (-2.0, -1.0) {%
    \small
    \tikz{\fill[black!70] (0,0) rectangle (0.35,0.25);}\;$\omega_a=1$
    \qquad
    \tikz{\draw[gray] (0,0) rectangle (0.35,0.25);}\;$\omega_a=0$
  };

\end{tikzpicture}
\caption{Structure of the functions $f_\omega$ in the lower-bound
  construction.  Only the $s$~coordinates in~$S_0$ matter;
  the remaining $d-s$ coordinates do not affect~$f_\omega$.
  The sub-hypercube $\{0,1\}^{S_0}$ is organized by Hamming
  weight $|x_{S_0}|$.  Below the middle layer ($|x_{S_0}|<m$),
  $f_\omega\equiv 0$; above ($|x_{S_0}|>m$),
  $f_\omega\equiv\beta$.    On the middle layer~$L_m$, $f_\omega(a)=\beta$ if
  $\omega_a=1$ and $f_\omega(a)=0$ if $\omega_a=0$.
 Different choices of $\omega\in\Omega$ (the
  Varshamov--Gilbert packing) yield functions that differ
  only on~$L_m$ and are well separated in~$L^2$.}
\label{fig:construction}
\end{figure}

We now check that each $f_\omega$ is monotone, takes values
in~$[0,1]$, and belongs to~$\cF_K$.

\smallskip
\begin{enumerate}[label=(\roman*),nosep]
\item \emph{Monotonicity.}\; Let $x\leqslant y$ in $\Hd$.  Then
  $|x_{S_0}|\leqslant|y_{S_0}|$.  If
  $|x_{S_0}|=|y_{S_0}|$, then $x_i\leqslant y_i$ for all~$i$
  together with $\sum_{i\in S_0}x_i=\sum_{i\in S_0}y_i$ forces
  $x_i=y_i$ for every $i\in S_0$, so
  $f_\omega(x)=f_\omega(y)$.  If $|x_{S_0}|<|y_{S_0}|$,
  one of the following holds:
  \begin{itemize}[nosep]
  \item $|x_{S_0}|<m$ and $|y_{S_0}|<m$:
    $f_\omega(x)=0=f_\omega(y)$.
  \item $|x_{S_0}|<m$ and $|y_{S_0}|=m$:
    $f_\omega(x)=0\leqslant\beta\omega_{y_{S_0}}=f_\omega(y)$.
  \item $|x_{S_0}|<m$ and $|y_{S_0}|>m$:
    $f_\omega(x)=0\leqslant\beta=f_\omega(y)$.
  \item $|x_{S_0}|=m$ and $|y_{S_0}|>m$:
    $f_\omega(x)=\beta\omega_{x_{S_0}}
    \leqslant\beta=f_\omega(y)$.
  \item $|x_{S_0}|>m$ and $|y_{S_0}|>m$:
    $f_\omega(x)=\beta=f_\omega(y)$.
  \end{itemize}
  In every case, $f_\omega(x)\leqslant f_\omega(y)$.\;\checkmark

\smallskip
\item \emph{Range.}\; $f_\omega\in[0,\beta]\subseteq[0,1]$,
  provided $\beta\leqslant 1$.  Since $\beta=K/(A\sqrt{s})$, the condition $\beta\leqslant 1$
is equivalent to $K\leqslant A\sqrt{s}$.  Under the assumption
$K\leqslant c\sqrt{\log n}$ of Theorem~\ref{thm:main}(ii),
this holds by choosing~$c$ small enough (depending
on~$A$).\;\checkmark

\smallskip
\item \emph{Influence.}\; For $i\notin S_0$,
  $\Delta_i f_\omega=0$.  For $i\in S_0$,
  $\Delta_i f_\omega(x)=f_\omega(x^{i\to 1})
  -f_\omega(x^{i\to 0})$
  is nonzero only when $|x_{S_0}^{i\to 0}|$ and
  $|x_{S_0}^{i\to 1}|=|x_{S_0}^{i\to 0}|+1$ straddle the
  middle layer.  Writing $k:=|x_{S_0\setminus\{i\}}|$ (which
  determines both sides since $|x_{S_0}^{i\to 0}|=k$ and
  $|x_{S_0}^{i\to 1}|=k+1$), the two cases where $\Delta_i f_\omega\neq 0$ are:
  \begin{itemize}[nosep]
  \item $k=m-1$: $f_\omega(x^{i\to 0})=0$,
    $f_\omega(x^{i\to 1})=\beta\,\omega_{x_{S_0}^{i\to 1}}$,
    giving $\Delta=\beta\,\omega_{x_{S_0}^{i\to 1}}$.
  \item $k=m$:
    $f_\omega(x^{i\to 0})=\beta\,\omega_{x_{S_0}^{i\to 0}}$,
    $f_\omega(x^{i\to 1})=\beta$,
    giving $\Delta=\beta(1-\omega_{x_{S_0}^{i\to 0}})$.
  \end{itemize}
  Since $k=|X_{S_0\setminus\{i\}}|$ has the
  $\mathrm{Bin}(s{-}1,1/2)$ distribution and is independent
  of~$X_i$,
  \[
    I_i(f_\omega)
    = \beta\Bigl(
    \mathbb P(k{=}m{-}1)\,\E[\omega_{X_{S_0}^{i\to 1}}
    \mid k{=}m{-}1]
    +\mathbb P(k{=}m)\,\E[1{-}\omega_{X_{S_0}^{i\to 0}}
    \mid k{=}m]
    \Bigr),
  \]
  where $\mathbb P(k=j)=\binom{s-1}{j}2^{-(s-1)}$.  Each
  conditional expectation is at most~$1$.  Choosing $A>0$ large enough that
$\binom{s}{m}/2^s\leqslant A/(2\sqrt{s})$
(which is possible by Stirling's approximation), Pascal's
identity $\binom{s-1}{m-1}+\binom{s-1}{m}=\binom{s}{m}$
gives
\[
  I_i(f_\omega)
  \;\leqslant\;\frac{A\beta}{\sqrt{s}}.
\]
Summing over $i\in S_0$ and using $\beta=K/(A\sqrt{s})$,
\begin{equation}\label{eq:influence-bound}
  I(f_\omega)
  \;\leqslant\;A\beta\sqrt{s}
  \;=\; K.\;\checkmark
\end{equation}\end{enumerate}

\medskip\noindent\textbf{Step~3: Separation.}\;
For $\omega\ne\omega'\in\Omega$, the functions $f_\omega$
and~$f_{\omega'}$ differ only on the middle layer.  Since $f_\omega(x)$ depends only on~$x_{S_0}$, each term
$(f_\omega(z)-f_{\omega'}(z))^2$ is repeated $2^{d-s}$
times in the sum over~$\{0,1\}^d$, and since $f_\omega$
and~$f_{\omega'}$ agree outside~$L_m$ with
$(\omega_a-\omega_a')^2=|\omega_a-\omega_a'|$ for
$\omega_a\in\{0,1\}$,
\[
  \|f_\omega-f_{\omega'}\|_2^2
  \;=\;\frac{1}{2^s}\sum_{z\in\{0,1\}^s}
  (f_\omega(z)-f_{\omega'}(z))^2
  \;=\;\frac{\beta^2\,d_H(\omega,\omega')}{2^s}.
\]
By~\eqref{eq:VG}, $d_H(\omega,\omega')\geqslant\binom{s}{m}/4$.
Stirling's approximation gives
$\binom{s}{m}/2^s\sim\sqrt{2/(\pi s)}$ as $s\to\infty$,
so in particular $\binom{s}{m}/2^s\geqslant 4c'/\sqrt{s}$
for an absolute constant $c'>0$ and all $s\geqslant 1$.
Therefore,
\begin{equation}\label{eq:sep-final}
  \|f_\omega-f_{\omega'}\|_2^2
  \;\geqslant\;\frac{c'\,\beta^2}{\sqrt{s}}
  \;=\;\frac{c'\,K^2}{A^2\,s^{3/2}}.
\end{equation}

\subsection{Proof of Theorem~\ref{thm:main}(ii)}
\label{ssec:fano-app}

For the lower bound, it suffices to consider Gaussian noise
$\varepsilon_j\sim N(0,\sigma^2)$, which satisfies the
sub-Gaussian assumption of Section~\ref{ssec:context} and is
therefore a legitimate choice within our model.  The
KL divergence between two Gaussian location models gives
\begin{equation}\label{eq:KL}
  \KL(P_\omega\|P_{\omega'})
  \;=\; \frac{n}{2\sigma^2}\,\|f_\omega-f_{\omega'}\|_2^2
  \;\leqslant\; \frac{n\beta^2}{2\sigma^2}
  \;=\; \frac{nK^2}{2\sigma^2 A^2 s},
\end{equation}
where we used $\|f_\omega-f_{\omega'}\|_\infty\leqslant\beta$
(since both functions take values in~$[0,\beta]$).

We apply Theorem~\ref{thm:fano} with
$\delta:=\sqrt{c'}K/(2As^{3/4})$, so that
$\|f_\omega-f_{\omega'}\|_2\geqslant 2\delta$
by~\eqref{eq:sep-final}.  The Fano bound~\eqref{eq:fano} gives a positive probability of error provided
  $\bar\KL\leqslant\frac{1}{2}\log|\Omega|$ (indeed, the
  right-hand side of~\eqref{eq:fano} is then at least
  $1-(\frac{1}{2}+\log 2/\log|\Omega|)$, which is bounded
  below by a positive constant~$p_0$ since
  $|\Omega|\to\infty$ with~$n$).
  
 We now verify the condition
$\bar\KL\leqslant\frac{1}{2}\log|\Omega|$.
By~\eqref{eq:KL},
\[
  \bar\KL
  \;\leqslant\;\frac{nK^2}{2\sigma^2 A^2 s}.
\]
By~\eqref{eq:VG} and Stirling's approximation,
\[
  \frac{1}{2}\log|\Omega|
  \;\geqslant\;\frac{1}{16}\binom{s}{m}
  \;\geqslant\;\frac{c''\,2^s}{\sqrt{s}},
\]
where $c''>0$ is an absolute constant.  The condition is
therefore satisfied whenever
\begin{equation}\label{eq:fano-sufficient}
  \frac{nK^2}{2\sigma^2 A^2 s}
  \;\leqslant\;
  \frac{c''\,2^s}{\sqrt{s}}.
\end{equation}
Recall that $s=\lfloor 2\log_2 n\rfloor$, so $2^s\geqslant n^2/2$.  Under the assumption $K\leqslant c\sqrt{\log n}$,
the left-hand side of~\eqref{eq:fano-sufficient} is at most
$nc^2\log n/(2\sigma^2 A^2 s)=O(n)$. The right-hand side
is at least $c''n^2/(2\sqrt{s})=\Omega(n^2/\sqrt{\log n})$.
Since $n^2/\sqrt{\log n}\gg n$,
condition~\eqref{eq:fano-sufficient} is satisfied for $n$
large enough (depending only on $\sigma$).

\medskip\noindent\textbf{Conclusion.}\;
With $s=\lfloor 2\log_2 n\rfloor$, the Fano condition is
satisfied for $n$ large enough, and Theorem~\ref{thm:fano}
gives, for some $p_0>0$ (depending only on~$\sigma$),
\[
  \inf_{\hat f}\;\max_\omega\;
  \mathbb P_\omega\bigl(\|\hat f-f_\omega\|_2
  \geqslant\delta\bigr)
  \;\geqslant\; p_0.
\]
The construction requires $d\geqslant s$ (so that the subset
$S_0\subseteq[d]$ with $|S_0|=s$ exists); the choice of~$S_0$
is arbitrary since the uniform measure treats all coordinates
symmetrically.  Since every $f_\omega$ belongs to $\cF_K$
(by~\eqref{eq:influence-bound}), for any estimator $\hat f$,
\[
  \sup_{f\in\cF_K}R(\hat f,f)
  \;\geqslant\;\max_\omega\;
  \E_\omega\bigl[\|\hat f-f_\omega\|_2^2\bigr]
  \;\geqslant\;\max_\omega\;\delta^2\,
  \mathbb P_\omega\bigl(\|\hat f-f_\omega\|_2
  \geqslant\delta\bigr)
  \;\geqslant\;p_0\,\delta^2,
\]
where the second inequality holds since $Z^2\geqslant a^2\one_{Z\geqslant a}$
for any $Z\geqslant 0$ and $a>0$, and the third
uses the Fano bound.  Taking the infimum over~$\hat f$ and using
$s\leqslant(2/\log 2)\log n$,
\begin{equation*}
  \inf_{\hat f}\;\sup_{f\in\cF_K}\;R(\hat f,f)
  \;\geqslant\;p_0\,\delta^2
  \;=\;\frac{p_0\,c'}{4A^2}\cdot\frac{K^2}{s^{3/2}}
  \;\geqslant\;\frac{c\,K^2}{(\log n)^{3/2}},
\end{equation*}
where $c>0$ depends only on~$\sigma$.  The construction
requires $d\geqslant s=\lfloor 2\log_2 n\rfloor$, which
is guaranteed by $d\geqslant(1/c)\log n$ upon choosing
$c\leqslant\log 2/2$.  Taking~$c$ smaller if necessary,
the condition $K\leqslant c\sqrt{\log n}$ of
Theorem~\ref{thm:main}(ii) is also satisfied, completing
the proof. \qed

\subsection{Proof of Theorem~\ref{thm:adaptive}(ii)}
\label{ssec:stratified}

We adapt the construction of Section~\ref{ssec:construction}
by choosing $\beta$ to saturate $I^{(2)}\leqslant B$ rather
than $I\leqslant K$.  Let $K\leqslant c\sqrt{\log n}$,
$B\in(0,K^2/\sqrt{\log n}]$, and define
\begin{equation*}
  \beta_B := \sqrt{\frac{B}{A_1\sqrt{s}}},
\end{equation*}
where $A_1>0$ is an absolute constant chosen large enough
(depending only on~$A$) and $s=\lfloor 2\log_2 n\rfloor$
as before.  The functions $f_\omega$ are defined as in
Section~\ref{ssec:construction} with $\beta$ replaced
by~$\beta_B$; the constants $c$ and $A_1$ are chosen
small and large enough, respectively, for all conditions
below to hold.

\smallskip
\noindent\emph{Range and monotonicity.}\;
Since $s\geqslant\log n/\log 2$,
\[
  \beta_B^2 = \frac{B}{A_1\sqrt{s}}
  \leqslant \frac{K^2\sqrt{\log 2}}{A_1\log n}
  \leqslant \frac{c^2\sqrt{\log 2}}{A_1} \leqslant 1
\]
for $c$ small enough.
Monotonicity follows by the same argument as in
Section~\ref{ssec:construction}.

\smallskip
\noindent\emph{$L^1$-influence bound.}\;
By the same calculation as~\eqref{eq:influence-bound},
$I_i(f_\omega)\leqslant A\beta_B/\sqrt{s}$ for each
$i\in S_0$, so
\[
  I(f_\omega)
  \;\leqslant\; A\beta_B\sqrt{s}
  \;=\; \sqrt{\frac{A^2 B\sqrt{s}}{A_1}}
  \;\leqslant\; K\cdot\frac{A(2/\log 2)^{1/4}}{\sqrt{A_1}}
  \;\leqslant\; K,
\]
where we used $\sqrt{s}\leqslant\sqrt{2/\log 2}\sqrt{\log n}$
and $B\leqslant K^2/\sqrt{\log n}$, and the last inequality
holds by choosing $A_1\geqslant A^2\sqrt{2/\log 2}$.

\smallskip
\noindent\emph{$L^2$-influence bound.}\;
Since $\Delta_i f_\omega\in\{0,\beta_B\}$, we have
$(\Delta_i f_\omega)^2=\beta_B\,\Delta_i f_\omega$ pointwise,
hence $I_i^{(2)}(f_\omega)=\beta_B\,I_i(f_\omega)$ for each
$i\in S_0$.  Summing over $i\in S_0$,
\[
  I^{(2)}(f_\omega)
  \;=\;\beta_B\,I(f_\omega)
  \;\leqslant\;A\beta_B^2\sqrt{s}
  \;=\;\frac{AB}{A_1}
  \;\leqslant\; B,
\]
using $I(f_\omega)\leqslant A\beta_B\sqrt{s}$ from the $L^1$-bound
above and $A_1\geqslant A$.
Hence $f_\omega\in\cF_{K,B}$.

\smallskip
\noindent\emph{Separation.}\;
By the same calculation as~\eqref{eq:sep-final},
\[
  \|f_\omega-f_{\omega'}\|_2^2
  \;\geqslant\;\frac{c'\beta_B^2}{\sqrt{s}}
  \;=\;\frac{c'B}{A_1 s},
\]
where $c'>0$ is an absolute constant.

\smallskip
\noindent\emph{Fano condition.}\;
$\bar\KL\leqslant n\beta_B^2/(2\sigma^2)
=nB/(2\sigma^2 A_1\sqrt{s})$.
Using $B\leqslant K^2/\sqrt{\log n}\leqslant c^2\sqrt{\log n}$
and $\sqrt{s}\geqslant\sqrt{\log n/\log 2}$,
\[
  \bar\KL\;\leqslant\;\frac{c^2 n\sqrt{\log 2}}{2\sigma^2 A_1}
  \;=\;O(n)
  \;\ll\;\tfrac{1}{2}\log|\Omega|
  \;=\;\Omega\!\left(\frac{n^2}{\sqrt{\log n}}\right).
\]
The Fano condition is therefore satisfied for $n$ large enough.

\smallskip
\noindent\emph{Conclusion.}\;
Applying Theorem~\ref{thm:fano} with
$\delta:=\sqrt{c'B/(4A_1 s)}$ and using
$s\leqslant(2/\log 2)\log n$,
\[
  \inf_{\hat f}\;\sup_{f\in\cF_{K,B}}\;R(\hat f,f)
  \;\geqslant\; p_0\,\delta^2
  \;=\;\frac{p_0 c'B}{4A_1 s}
  \;\geqslant\;\frac{cB}{\log n},
\]
which is~\eqref{eq:adaptive-lower}, adjusting~$c$.\qed

\section{Discussion}\label{sec:discussion}

Theorem~\ref{thm:main} leaves a gap between the upper bound
$O(K/\sqrt{\log n})$ and the lower bound
$\Omega(K^2/(\log n)^{3/2})$.  For fixed~$K$, this gap is a
factor of~$\log n$.  However, Theorem~\ref{thm:adaptive} shows
that the picture is sharper than it appears: stratifying $\cF_K$
by $I^{(2)}(f)$, the gap on each sub-class~$\cF_{K,B}$ reduces
to~$\sqrt{\log n}$, with matching bounds $CB/\sqrt{\log n}$ (upper)
and $cB/\log n$ (lower).  The residual $\log n$ gap on the full class~$\cF_K$ is an
aggregation artifact: the upper bound is largest when
$I^{(2)}(f)\approx K$ (near-Boolean functions, for which
$\Delta_i f\in\{0,1\}$ and $I^{(2)}=I$), whereas the lower
bound construction achieves $I^{(2)}(f_\omega)\asymp
K^2/\sqrt{\log n}$. We discuss
the sources of this remaining gap and several directions
for further work.

\medskip\noindent\textbf{The upper bound bottleneck.}\;
The rate $K/\sqrt{\log n}$ arises from a bias-variance
trade-off in the degree parameter~$d_0$: the bias decreases
as $d_0$ grows, but the estimated spectral set~$\hat\cS$ has cardinality
at most $e^{Cd_0^2}$ for a constant $C$ depending on~$\gamma$
(see~\eqref{eq:log-Shat}), forcing
$d_0\lesssim\sqrt{\log n}$ to keep the variance bounded.

The root cause is that our
estimator includes all subsets of~$\hat J$ of
size~$\leqslant d_0$ in~$\hat\cS$, many of which carry
negligible Fourier weight.  Identifying from data the
relevant Fourier subsets~$\cS\subseteq 2^J$ rather than
all subsets of~$\hat J$ would allow a larger~$d_0$ and
a faster rate, but this higher-order selection problem
remains open.

\begin{openproblem}\label{op:selection}
  Is there an estimator that identifies the relevant
  Fourier subsets~$\cS$ (not just the influential
  coordinates~$J$) from noisy observations, with
  estimation cost proportional to~$|\cS|$ rather
  than~$|\hat\cS|$?
\end{openproblem}

\medskip\noindent\textbf{The lower bound bottleneck.}\;
The rate $K^2/(\log n)^{3/2}$ reflects the geometry of the
middle layer of $\{0,1\}^s$ ($s=\Theta(\log n)$): its
$\mu$-measure $\Theta(1/\sqrt{s})$ forces the scaling
$\beta=\Theta(K/\sqrt{s})$ to fit the influence budget,
yielding a squared separation of order $K^2/s^{3/2}$.
Improving the lower bound would require either packing
functions that vary on a larger fraction of the hypercube,
or a testing argument that goes beyond Fano's inequality.

\begin{openproblem}\label{op:rate}
  What is the exact minimax rate of estimation over
  $\cF_{K,B}$?  Can the $\sqrt{\log n}$ gap between
  $CB/\sqrt{\log n}$ and $cB/\log n$ in
  Theorem~\ref{thm:adaptive} be closed?
\end{openproblem}

Our conjecture is that the upper bound $CB/\sqrt{\log n}$
is the correct rate, and that the lower bound construction
does not capture the full difficulty of~$\cF_{K,B}$.

\medskip\noindent\textbf{Comparison with isotonic regression on $[0,1]^d$.}\;
For isotonic regression on~$[0,1]^d$, \citet{han2019} showed
that the minimax rate is $n^{-1/d}$ (up to logarithmic
factors), which becomes uninformative for $d\gtrsim\log n$.  On the
Boolean hypercube, the influence constraint rescues estimation
even for $d$ nearly exponential in~$n$: the rate
$K/\sqrt{\log n}$ remains finite under the mild condition
$\log d\leqslant n^{1-\varepsilon}$.  The two settings are not
directly comparable (different domains, different structural
assumptions), but the qualitative message is the same:
meaningful high-dimensional estimation requires constraints
beyond monotonicity.  Whether an analogue of the influence
budget can be defined on~$[0,1]^d$ is an interesting open
question.

\medskip\noindent\textbf{Computational cost.}\;
Step~1 of the estimator (influence estimation) requires $O(nd)$
operations.  Step~2 (Fourier estimation) computes one average
per element of~$\hat\cS$, at cost $O(n|\hat\cS|)$.  Since
$\log|\hat\cS|=O(\log n)$, the total cost is $O(n^{c})$ for a
constant~$c$ depending on~$\gamma$---polynomial in~$n$, but
with a potentially large exponent.  In practice, one may choose
a smaller~$d_0$ (e.g., $d_0=2$ or $3$), accepting a larger
bias in exchange for a much smaller computational cost.

\medskip\noindent\textbf{Extensions.}\;
Our results are stated for the uniform measure $\mu$ on~$\Hd$.
The Fourier expansion and Bonami--Beckner inequality extend to
general product measures
$\mu_1\otimes\cdots\otimes\mu_d$ on $\{0,1\}^d$
\citep{odonnell2014}, and the spectral concentration argument
carries over with modified constants when the marginal
probabilities $\mu_i(\{1\})$ are bounded away from $0$ and~$1$.
The degenerate case (some marginals close to $0$ or~$1$)
introduces additional difficulties and is left for future work.

One might also consider the class
$\{f\in\cM_d:I^{(2)}(f)\leqslant K\}$, constraining the
$L^2$-influence directly.  The spectral concentration still
applies (since $I^{(2)}\leqslant I$, this is a larger class),
but the estimation of~$I_i^{(2)}(f)=\E[(\Delta_i f(X))^2]$
from data is harder: unlike the $L^1$-influence, it is not
a simple difference of means.  Whether the minimax rate
changes under this alternative constraint is an open question.

\bibliographystyle{apalike}
\bibliography{references}

\end{document}